\numberwithin{equation}{section}
\newtheorem{thm}[equation]{Theorem}
\newtheorem{prop}[equation]{Proposition}
\newtheorem{lem}[equation]{Lemma}
\newtheorem{corol}[equation]{Corollary}
\theoremstyle{definition}
\newtheorem{defn}[equation]{Definition}
\theoremstyle{remark}
\newtheorem{rem}[equation]{Remark}
\newtheorem{example}[equation]{Example}
\newcommand{\R}{\mathbb{R}}
\newcommand{\eps}{\varepsilon}
\newcommand{\beq}{\begin{equation}}
\newcommand{\eeq}{\end{equation}}
\newcommand{\Hmm}[1]{\leavevmode{\marginpar{\tiny%
$\hbox to 0mm{\hspace*{-0.5mm}$\leftarrow$\hss}%
\vcenter{\vrule depth 0.1mm height 0.1mm width \the\marginparwidth}%
\hbox to
0mm{\hss$\rightarrow$\hspace*{-0.5mm}}$\\\relax\raggedright #1}}}
\begin{document}

\title[On the Phragm\'{e}n--Lindel\"{o}f and the superposition principles]{ On the Phragm\'{e}n-Lindel\"{o}f  and the superposition principles for the $p$-Laplacian
}

\author{Pier Domenico Lamberti}

\address{%
Dipartimento di Tecnica e Gestione dei Sistemi Industriali\\
Universit\`a degli Studi di Padova\\
Stradella S. Nicola 3\\
36100 Vicenza\\
Italy}

\email{lamberti@math.unipd.it}

\author{Vitaly Moroz}
\address{
Department of Mathematics\\ 
University of Swansea\\
Swansea SA1 8EN\\
Wales, UK}
\email{v.moroz@swansea.ac.uk}
\subjclass[2010]{35J60, 35B53, 35B40}

\keywords{$p$-Laplacian, Phragm\'{e}n--Lindel\"{o}f  principle, superposition principle, Hardy potential}


\begin{abstract} 
	We study sub and supersolutions for the $p$-Laplace type elliptic equation of the form
	$$-\Delta_p u-V|u|^{p-2}u=0\quad\text{in $\Omega$},$$
	where $\Omega$ is a radially symmetric domain in $\R^N$ and $V(x)\ge 0$ is a continuous potential such that the solutions of the equation satisfy the comparison principle on bounded subdomains of $\Omega$.
	In this work we  establish a superposition principle and then use it to develop a version of a  Phragm\'{e}n--Lindel\"{o}f comparison principle in the case $p\ge 2$.  Moreover, by applying this principle to the case of Hardy-type potentials we recover and improve  a number of known lower and upper estimates for sub and supersolutions. 
\end{abstract}

\maketitle 


\section{Introduction}

\subsection{Background}
Given a sufficiently smooth domain $\Omega$ in $\mathbb{R}^N$ with $N\ge 2$, we consider the  equation 
\begin{equation}\label{e1}
	-\Delta_p u-V|u|^{p-2}u=0,\ \ {\rm in}\ \Omega ,
\end{equation}
where $\Delta_p u=\nabla\cdot\big(|\nabla u|^{p-2}\nabla u\big)$ is the $p$--Laplacian with $p>1$ and $V\in C(\Omega)$ is a potential. 
Note that in this paper by domain we mean a connected open set. 
Unless otherwise indicated, solutions and sub or supersolutions of \eqref{e1} are always understood in the classical sense, i.e. as $C^2$--functions that satisfy the relevant equation or inequality. This is not really restrictive for the type of results we prove in this paper, see Section~\ref{section2} for details. Moreover, throughout the paper we assume that 
\begin{equation*}
\text{\it equation \eqref{e1} admits a positive supersolution in $\Omega$ that is not a solution,}\leqno{(*)}
\end{equation*} 
that is, there exists $0<\phi\in C^2(\Omega)$ 
such that $-\Delta_p\phi-V\phi^{p-1}\ge 0$ and $-\Delta_p\phi-V\phi^{p-1}\neq 0$ in $\Omega$. 
Note that we do not explicitly assume any boundary regularity of the supersolution $\phi$.

If $V\le 0$ and $V\neq 0$ then a constant is always a strict supersolution of \eqref{e1} and hence $(*)$ holds automatically. If $V\geq 0$ or $V$ changes sign then $(*)$ is an implicit assumption about the size and the structure of the potential $V$. It is well known that at least under mild boundary regularity assumptions condition $(*)$ is equivalent to the validity of the maximum principle for the equation \eqref{e1} on bounded domains, see \cite[Theorems 2, 3]{Garcia-Melian} (see \cite{PT} for similar results on the unbounded domains).
\begin{thm}[\sc Maximum principle]\label{thmMP}
The following two properties are equivalent:	
\begin{itemize}
	\item[$i)$] $(*)$ holds;\smallskip
	\item[$ii)$] if $u\in C^2(\Omega)\cap C(\bar \Omega)$ and $u_{|\partial\Omega}\in C^{1,\alpha}(\partial\Omega)$ with $0<\alpha \le 1$, are such that
	\begin{equation} \label{maxp}
		\left\{ 
		\begin{array}{rcll}
			-\Delta_p u-V|u|^{p-2}u&\ge&  0&\text{in $\Omega$,}\\
			u&\ge& 0&\text{on $\partial\Omega$,}
		\end{array}
		\right.
	\end{equation} 
then $u\ge 0$ in $\Omega$.
\end{itemize}
\end{thm}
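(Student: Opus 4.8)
The implication $(*)\Rightarrow ii)$ is the analytic core, and I would establish it by a Picone-type argument; the converse I would obtain via the principal eigenfunction of $-\Delta_p-V$. Throughout I would work with the weak notion of sub/supersolution, the equivalence with the classical notion being the content of Section~\ref{section2}, so that test functions in $W^{1,p}_0$ of bounded subdomains are admissible.

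\emph{Proof of $(*)\Rightarrow ii)$.} Suppose $u$ satisfies \eqref{maxp} but $u<0$ somewhere, and set $\Omega^-:=\{x\in\Omega:u(x)<0\}$, so that $w:=-u>0$ on $\Omega^-$, while $w=0$ on $\partial\Omega^-$; the hypotheses $u\ge 0$ on $\partial\Omega$ and $u_{|\partial\Omega}\in C^{1,\alpha}(\partial\Omega)$ guarantee $w\in W^{1,p}_0$ of each bounded component of $\Omega^-$. On $\Omega^-$ one has $|u|^{p-2}u=-w^{p-1}$ and $-\Delta_p u=\Delta_p w$, so \eqref{maxp} reads $-\Delta_p w-Vw^{p-1}\le 0$: $w$ is a positive subsolution of \eqref{e1} on $\Omega^-$. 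Let $\phi$ be the positive supersolution from $(*)$ and fix a bounded component $\omega$ of $\Omega^-$. Testing the subsolution inequality for $w$ with $\psi=w$ gives $\int_\omega|\nabla w|^p\le\int_\omega Vw^p$, and testing the supersolution inequality for $\phi$ with the admissible $\eta=w^p\phi^{1-p}\ge 0$ gives $\int_\omega|\nabla\phi|^{p-2}\nabla\phi\cdot\nabla\!\left(w^p\phi^{1-p}\right)\ge\int_\omega Vw^p$, with strict inequality whenever $\eta>0$ at a point where $\phi$ fails to solve \eqref{e1}. Combining with Picone's inequality $|\nabla\phi|^{p-2}\nabla\phi\cdot\nabla(w^p\phi^{1-p})\le|\nabla w|^p$ produces the chain $\int_\omega Vw^p\ge\int_\omega|\nabla w|^p\ge\int_\omega|\nabla\phi|^{p-2}\nabla\phi\cdot\nabla(w^p\phi^{1-p})\ge\int_\omega Vw^p$, forcing equality throughout; equality in Picone forces $w=c\phi$ on $\omega$ for some $c\ge 0$. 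If $\omega\subsetneq\Omega$, then $\partial\omega\cap\Omega\neq\emptyset$ by connectedness, and there $w=0$ while $\phi>0$, so $c=0$ and $w\equiv 0$ on $\omega$, a contradiction; if instead $\Omega^-=\Omega$, then $\eta>0$ throughout $\Omega$ and the supersolution inequality is strict because $-\Delta_p\phi-V\phi^{p-1}\not\equiv 0$, again a contradiction. Hence $u\ge 0$ in $\Omega$.

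\emph{Proof of $ii)\Rightarrow(*)$.} I argue by contraposition. If \eqref{e1} admits no positive supersolution that is not a solution, then, exhausting $\Omega$ by smooth bounded subdomains and using monotonicity of the principal eigenvalue $\lambda_1(-\Delta_p-V;\cdot)$, one finds a bounded subdomain $\omega\subseteq\Omega$ with $\lambda_1(-\Delta_p-V;\omega)\le 0$: were it strictly positive on every such subdomain, a suitably normalized limit of the corresponding positive eigenfunctions (obtained via Harnack's inequality and the interior $C^{1,\alpha}$ estimates for the $p$-Laplacian) would be a positive \emph{strict} supersolution on $\Omega$. Let $\phi_\omega>0$ solve $-\Delta_p\phi_\omega-V\phi_\omega^{p-1}=\lambda_1(\omega)\phi_\omega^{p-1}\le 0$ in $\omega$ with $\phi_\omega=0$ on $\partial\omega$. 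A short computation across $\partial\omega$, where the inner normal derivative of $\phi_\omega$ is nonpositive by Hopf's lemma, shows that the function equal to $-\phi_\omega$ on $\omega$ and to $0$ on $\Omega\setminus\omega$ is a weak supersolution of \eqref{e1} in $\Omega$, vanishing on $\partial\Omega$ and negative on $\omega$; after a standard mollification this violates $ii)$.

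\emph{Main obstacle.} The delicate points are the scarce regularity available: $\Omega^-$ and $\phi$ need not be smooth up to the boundary, which is precisely why classical sub/supersolutions must be reinterpreted weakly and why the $C^{1,\alpha}$-trace hypothesis on $u$ enters, in securing $w\in W^{1,p}_0$ on bounded components of $\Omega^-$; and, in the converse, the control of $\lambda_1(-\Delta_p-V;\cdot)$ along an exhaustion of a possibly irregular or unbounded $\Omega$. The complete details are in \cite[Theorems 2, 3]{Garcia-Melian} and, for unbounded $\Omega$, in \cite{PT}; we reproduce the statement here for the reader's convenience.
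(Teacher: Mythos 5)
First, a point of reference: the paper does not actually prove Theorem~\ref{thmMP} — it imports it verbatim from \cite[Theorems 2, 3]{Garcia-Melian} (with \cite{PT} cited for unbounded domains) — so your reconstruction stands or falls on its own. Your forward implication $(*)\Rightarrow ii)$ is the standard Allegretto--Huang/Picone argument and is essentially sound: the sign flip turning $u$ into a positive subsolution $w=-u$ on $\Omega^-$, the chain of inequalities, and the equality analysis in Picone's identity are all correct, and you rightly flag the one genuine technical point, namely the admissibility of $w^p\phi^{1-p}$ when $\phi$ may degenerate at $\partial\Omega$ (usually handled by testing with $w^p(\phi+\eps)^{1-p}$ or with $\min(w,n\phi)^p\phi^{1-p}$ and passing to the limit). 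One caveat: if $\Omega$ is unbounded, $\Omega^-$ may have \emph{unbounded} components on which $w\notin W^{1,p}_0$ and the argument as written does not close; since the paper applies the surrounding machinery to exterior domains, you should either restrict the statement to bounded $\Omega$ (as \cite{Garcia-Melian} does) or add the decay hypotheses of \cite{PT}.

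The converse is where there is a genuine gap. Your pivotal claim — that failure of $(*)$ forces $\lambda_1(-\Delta_p-V;\omega)\le 0$ on some bounded subdomain $\omega$ — is false in general. Along an exhaustion with $\lambda_1(\omega_n)>0$ and $\lambda_1(\omega_n)\downarrow\lambda_\infty\ge 0$, the normalized eigenfunctions converge to a positive solution of $-\Delta_p\phi-V\phi^{p-1}=\lambda_\infty\phi^{p-1}$, which is a \emph{strict} supersolution only if $\lambda_\infty>0$; in the critical case $\lambda_\infty=0$ the limit is an exact solution, and $(*)$ can fail even though every bounded subdomain has positive principal eigenvalue (take $p=2$, $V\equiv 0$, $\Omega=\R^2$: every positive superharmonic function on $\R^2$ is constant, so $(*)$ fails, yet $\lambda_1(\omega)>0$ for all bounded $\omega$). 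For bounded $\Omega$ with $V$ continuous up to the boundary the dichotomy can be repaired — when $\lambda_1(\Omega)>0$ one manufactures a strict supersolution by solving $-\Delta_p\phi-V\phi^{p-1}=f$ with $0\le f\not\equiv 0$, which is the actual route of \cite{Garcia-Melian} — but this step must be supplied, not inferred from the exhaustion. Finally, the function obtained by extending $-\phi_\omega$ by zero is only Lipschitz across $\partial\omega$, whereas $ii)$ quantifies over $C^2(\Omega)\cap C(\bar\Omega)$; ``a standard mollification'' does not preserve the supersolution property for a nonlinear operator, so producing an admissible violator of $ii)$ requires either a genuine smoothing argument or restating $ii)$ for weak supersolutions as in \cite{Garcia-Melian}.
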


While in the linear case $p=2$ the relation between maximum and comparison principles for \eqref{e1} is straightforward and follows from additivity, this is no longer the case in the nonlinear case $p\neq 2$, unless $V\le 0$. In fact, the only situation when
$-\Delta_p u-V|u|^{p-2}u\le-\Delta_p v-V|v|^{p-2}v$ in a bounded smooth domain $\Omega$ and $u\le v$ on $\partial\Omega$ implies that $u\le v$ in $\Omega$ is the case when $V\leq 0$ in $\Omega$, i.e. when constants are supersolutions (see \cite[Section 5]{Garcia-Melian}). In general, additional restrictions are necessary for a comparison principle to be valid when $(*)$ holds. The following was proved in \cite[Theorem 5]{Garcia-Melian}.

\begin{thm}[\sc Comparison principle]\label{thmCP}
Assume that $(*)$ holds. Let $u,v\in C^2(\Omega)\cap C(\bar \Omega)$ and $u_{|\partial\Omega},v_{|\partial\Omega}\in C^{1,\alpha}(\partial\Omega)$ with $0<\alpha \le 1$, be such that
\begin{equation} \label{comp}
	\left\{ 
	\begin{array}{rcll}
		-\Delta_p u-V|u|^{p-2}u&\le&-\Delta_p v-V|v|^{p-2}v&\text{in $\Omega$,}\\
		u&\le& v&\text{on $\partial\Omega$.}
	\end{array}
	\right.
\end{equation} 
Assume additionally that $-\Delta_p v-V|v|^{p-2}v\ge 0$ in $\Omega$ and $v\ge 0$ on $\partial\Omega$, that is $v$ is a non-negative supersolution in $\Omega$.
Then $u\le v$ in $\Omega$.
\end{thm}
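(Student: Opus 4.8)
The plan is to argue by contradiction. The difficulty is that for $p\neq 2$ and $V>0$ the map $w\mapsto-\Delta_p w-V|w|^{p-2}w$ is \emph{not} monotone, so naively subtracting the two inequalities fails; the supersolution hypothesis on $v$ is precisely what lets one replace this by the hidden monotonicity of $w\mapsto(-\Delta_p w)/w^{p-1}$ carried by the Picone and D\'\i az--Sa\'a inequalities. First I would reduce to the case $v>0$ in $\Omega$. Put $B:=-\Delta_p v-V|v|^{p-2}v\ge0$. If $v\equiv0$, the claim is $u\le0$, which follows from Theorem~\ref{thmMP} applied to $-u$: since $-\Delta_p(-u)=\Delta_p u$ and $|{-u}|^{p-2}(-u)=-|u|^{p-2}u$, one has $-\Delta_p(-u)-V|{-u}|^{p-2}(-u)=-(-\Delta_p u-V|u|^{p-2}u)\ge0$ in $\Omega$ while $-u\ge0$ on $\partial\Omega$. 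If $v\not\equiv0$, then $-\Delta_p v+|V|\,v^{p-1}\ge B\ge0$ with $|V|$ locally bounded, so V\'azquez's strong maximum principle forces $v>0$ throughout $\Omega$.

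Assume now that $D:=\{x\in\Omega:\ u(x)>v(x)\}$ is nonempty. It is open, $u>v>0$ on $D$, and $u=v$ on $\partial D$ (on the part of $\partial D$ lying on $\partial\Omega$ because there $u\le v$ by hypothesis and $u\ge v$ by continuity from $D$). On $D$, using $u,v>0$, $B\ge0$ and $u^{p-1}>v^{p-1}$,
\[
\frac{-\Delta_p u}{u^{p-1}}\ \le\ V+\frac{B}{u^{p-1}}\ \le\ V+\frac{B}{v^{p-1}}\ =\ \frac{-\Delta_p v}{v^{p-1}}\qquad\text{on }D,
\]
and this single step is the only place the hypothesis that $v$ is a supersolution is used. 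Multiplying by $u^p-v^p>0$ and integrating over $D$ gives
\[
\int_D\Big((-\Delta_p u)\,\frac{u^p-v^p}{u^{p-1}}-(-\Delta_p v)\,\frac{u^p-v^p}{v^{p-1}}\Big)\,dx\ \le\ 0 .
\]
On the other hand, integrating by parts (the weights $\tfrac{u^p-v^p}{u^{p-1}}$ and $\tfrac{u^p-v^p}{v^{p-1}}$ vanish on $\partial D$) the left-hand side equals $\int_D\big(R(u,v)+R(v,u)\big)\,dx$, where $R(a,b):=|\nabla a|^p-|\nabla b|^{p-2}\nabla b\cdot\nabla(a^p/b^{p-1})\ge0$ is the Picone expression; the nonnegativity of this left-hand side is the D\'\i az--Sa\'a inequality, valid for every $p>1$. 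Hence the integral is $0$, and the equality case of Picone's inequality forces $u=c\,v$ on each connected component of $D$, necessarily with $c>1$ since $u>v$ there.

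It remains to rule this out. If some component $D_0$ of $D$ has a boundary point $x_1\in\partial D_0\cap\Omega$, then $u(x_1)=v(x_1)$ by continuity, and $u=c\,v$ on $\overline{D_0}$ gives $c\,v(x_1)=v(x_1)$ with $v(x_1)>0$, so $c=1$, a contradiction. Otherwise every component of $D$ is open and closed in $\Omega$, hence $D=\Omega$ and $u=c\,v$ in $\Omega$ with $c>1$; then $c\,v\le v$ on $\partial\Omega$ forces $v\equiv0$ on $\partial\Omega$, and $-\Delta_p u-V|u|^{p-2}u=c^{p-1}B\le B$ forces $B\equiv0$, so $v$ is a positive solution of \eqref{e1} vanishing on $\partial\Omega$; applying Theorem~\ref{thmMP} to $-v$ (which satisfies the reversed inequality, as above, and is $\ge0$ on $\partial\Omega$) gives $v\le0$ in $\Omega$, contradicting $v>0$. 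Therefore $D=\emptyset$, i.e. $u\le v$ in $\Omega$.

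The step I expect to be the main obstacle is the justification of the integration by parts underlying the D\'\i az--Sa\'a identity on $D$. Since $D$ is only the superlevel set of a $C^2$ function, one should regularize, approximating $D$ by smooth subdomains; and near the portion of $\partial D$ contained in $\partial\Omega$ one needs $u,v\in W^{1,p}(\Omega)$ and that the weights $\tfrac{u^p-v^p}{u^{p-1}}$, $\tfrac{u^p-v^p}{v^{p-1}}$ (which are continuous up to $\partial\Omega$ and vanish there) lie in $W^{1,p}_0$ of the relevant subdomain, which is exactly where the $C^{1,\alpha}$-regularity of the boundary traces, via elliptic boundary regularity, enters. I note that no restriction $p\ge2$ is required here: Picone's inequality, the D\'\i az--Sa\'a inequality, and V\'azquez's strong maximum principle all hold for every $p>1$.
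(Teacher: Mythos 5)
The paper does not actually prove this statement: it is quoted verbatim from \cite[Theorem 5]{Garcia-Melian}, so there is no internal proof to compare against. Judged on its own, your argument is sound and follows what is essentially the standard (and, in spirit, the original) route: reduce to $v>0$ via the maximum principle and the strong maximum principle, observe that on the open set $D=\{u>v\}$ the supersolution hypothesis $B:=-\Delta_p v - V v^{p-1}\ge 0$ converts the non-monotone comparison of the two differential expressions into the monotone comparison of $(-\Delta_p w)/w^{p-1}$, and then invoke the Picone/D\'{\i}az--Sa\'{a} identity with the test functions $(u^p-v^p)/u^{p-1}$ and $(u^p-v^p)/v^{p-1}$ to force $u=cv$ on components of $D$, which is then excluded by the boundary conditions together with a final application of Theorem~\ref{thmMP}. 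All the individual steps check out, including the sign bookkeeping in the Picone remainders and the endgame case analysis. The one genuine gap is the one you yourself flag: with only $u,v\in C^2(\Omega)\cap C(\bar\Omega)$ and $C^{1,\alpha}$ boundary traces, the gradients need not be controlled up to $\partial\Omega$, so neither the membership of the test functions in $W^{1,p}_0$ of $D$ (note that $(u^p-v^p)/v^{p-1}$ can degenerate where $v\to 0$ on $\partial D\cap\partial\Omega$) nor the finiteness of $\int_D|\nabla u|^p$ is automatic; this is exactly where the trace regularity and an exhaustion of $D$ by smooth subdomains must be used, and it is the part of the argument that carries the real technical weight in the cited reference. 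Your closing remark that no restriction $p\ge 2$ is needed is correct and consistent with the paper, which only imposes $p\ge 2$ later, for the superposition principle.
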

The requirement for $v$ to be a non-negative supersolution in $\Omega$ cannot be dropped unless $V\leq 0$, see \cite[Corollary 8]{Garcia-Melian}. 

Extensions of comparison principles to unbounded domains were developed in \cite{Shafrir,PT,LLM}, all these results include technical assumptions on the decay of the subsolution $u$ and its derivatives at infinity.

In the linear case $p=2$, a comparison principle for Schr\"odinger operators $-\Delta-V$ on bounded domains and with sufficiently regular potentials $V$ can be easily extended to a  Phragm\'{e}n--Lindel\"{o}f type comparison principle, which provides a useful characterisation of admissible asymptotic behaviour of supersolutions decay for instance when $\Omega$ is an exterior domain \cite{LLM}, or when $V$ is very singular on the boundary of a bounded domain \cite{Bandle}. The proof of the Phragm\'{e}n--Lindel\"{o}f type comparison principle relies on the additivity of the Schr\"odinger operator $-\Delta-V$, which is no longer available if $p\neq 2$ and $V\neq 0$. 

The main goal of this work is to show that a partial extension of the Phragm\'{e}n--Lindel\"{o}f type comparison principle is possible in the case $p>2$. Our arguments involve a superposition principle for the $p$--Laplacian that we establish and which provides a partial replacement for the missing additivity. In order to better clarify our contribution, we recall the statement and the proof of a Phragm\'{e}n--Lindel\"{o}f type comparison principle in exterior domains in the linear case $p=2$. 

Note that the Phragm\'{e}n--Lindel\"{o}f principle was originally formulated in \cite{phragmen} for holomorphic  functions defined on arbitrary domains of the complex plane and was applied to a number of cases including unbounded sectors. A corresponding result for harmonic functions in the half space of  $\R^N$ was proved in \cite{ah}, while the case of general linear elliptic partial differential equations was considered in \cite{gilbarg} for two variables and in \cite{hopf} for $N$ variables. The  result for harmonic functions was  extended in \cite{lqvjde} to the case of $p$-harmonic functions. Further generalizations to fully nonlinear equations were obtained in the paper \cite{cavi} to which we refer for more information and references on this subject. We  refer to \cite{PW}, \cite{rudin} for an introduction to this topic.

\subsection{Phragm\'{e}n--Lindel\"{o}f principle in exterior domains in the case $p=2$}

Let $u, v$ be two $C^2$--functions on the exterior of the  unit open ball $B_1$ centered at zero.  We say that $u$ is {\it dominated} by $v$ and write  
$$u\lesssim v\quad\text{in $B_{1}^c$},$$
if  there exists $C>0$ such that $u(x)\le C v(x)$ for all $|x|\ge 1$; and 
$u$ is {\it smaller} than $v$ if
\beq\label{e-small}
\lim_{|x|\to\infty}\frac{u(x)}{v(x)}=0.
\eeq
A simple but essential observation in the spirit of \cite{Agmon} is that if a subsolution is smaller than {\em one} of the positive supersolutions, then  it is dominated by {\em every} positive supersolution.

\begin{lem}\label{lem-small}
	Assume that $p=2$ and $(*)$ holds. Assume that  $u\ge 0$ is a subsolution of \eqref{e1} in $B_{1}^c$ and  that 
	\begin{equation}\label{a-small}
		\text{$\exists$ a positive supersolution $v$ of \eqref{e1} in $B_{1}^c$ such that $u$ is smaller than $v$.}
	\end{equation}
	Then 
	$$u\lesssim w\quad\text{in $B_{1}^c$}$$
	for any positive supersolution $w$ of \eqref{e1} in $B_{1}^c$.
\end{lem}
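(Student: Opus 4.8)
The plan is to run a standard Agmon-type comparison argument on the exhausting annuli $A_R:=\{x:1<|x|<R\}$, exploiting the fact that, since $p=2$, the operator $L:=-\Delta-V$ is linear, so that positive linear combinations of supersolutions are again supersolutions. Fix a positive supersolution $w$ of \eqref{e1} in $B_1^c$ and put $C:=\max\big\{1,\ (\max_{|x|=1}u)/(\min_{|x|=1}w)\big\}$, which is a finite positive number because $u,w$ are continuous on the compact sphere $\{|x|=1\}$ and $w>0$ there; by construction $u\le Cw$ on $\{|x|=1\}$. Now fix $\eps>0$. Since $u$ is smaller than $v$, there exists $R_\eps>1$ such that $u\le\eps v$ on $\{|x|\ge R_\eps\}$, hence $u\le Cw+\eps v$ on the whole boundary $\partial A_R=\{|x|=1\}\cup\{|x|=R\}$ for every $R\ge R_\eps$.

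The key step is to apply the comparison principle on the bounded domain $A_R$ with $R\ge R_\eps$. Set $z:=Cw+\eps v>0$. By linearity, $-\Delta_p z-V|z|^{p-2}z=Lz=C\,Lw+\eps\,Lv\ge0$, so $z$ is a non-negative supersolution of \eqref{e1} in $A_R$; moreover $-\Delta_p u-V|u|^{p-2}u=-\Delta u-Vu\le0\le Lz$ because $u$ is a subsolution and $p=2$, and $u\le z$ on $\partial A_R$ by the previous paragraph. Choosing $R$ large enough that $(*)$ also holds on $A_R$ — the supersolution $\phi$ furnished by $(*)$ in $B_1^c$ is a strict supersolution at some point of $B_1^c$, which lies in $A_R$ once $R$ is large, so $\phi|_{A_R}$ witnesses $(*)$ on $A_R$ — Theorem~\ref{thmCP} applies and gives $u\le Cw+\eps v$ in $A_R$.

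Letting $R\to\infty$ yields $u\le Cw+\eps v$ throughout $B_1^c$, and then letting $\eps\to0^+$ gives $u\le Cw$ in $B_1^c$, that is $u\lesssim w$. The one place where the nonlinearity would obstruct the argument is the seemingly harmless identity $Lz=C\,Lw+\eps\,Lv$: for $p\neq2$ one cannot add a multiple of $w$ to a multiple of $v$ and retain the supersolution property, and supplying a workable replacement for this additivity is exactly what the superposition principle developed later in the paper is for. The remaining points — the $C^{1,\alpha}$ boundary regularity required in Theorem~\ref{thmCP}, which is automatic here since $u,v,w$ are $C^2$ near the spheres $\{|x|=1\}$ and $\{|x|=R\}$, and the two limit passages — are routine.
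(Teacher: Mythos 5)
Your proof is correct and follows essentially the same route as the paper: exploit the linearity available when $p=2$ to build a barrier from the supersolution $v$, compare with $w$ on exhausting annuli via the comparison principle, and pass to the limit in $\eps$. The only (cosmetic) difference is that you add $\eps v$ to the majorant $Cw$ and let $R\to\infty$ before $\eps\to 0$, whereas the paper subtracts $\eps v$ from $u$ and works on the annulus $B[1,R_\eps[$ directly; your explicit remark on why $(*)$ persists on the annuli is a nice touch but not a departure.
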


\begin{proof}

	Fix a positive supersolution $w$ and choose $c>0$ such that $u(x)\le cw(x)$ as $|x|=1$, this is possible since $w(x)>0$ when $|x|=1$.
	For small $\eps>0$ consider the barrier functions  $u_{\epsilon}$ defined by 
	\begin{equation}\label{e-barrier}
	u_\eps =u-\eps v
	\end{equation}
	and observe that $u_\eps$ is a subsolution of \eqref{e1} in $B_{1}^c$, since $p=2$.
	In view of \eqref{e-small}, there exists $R_\eps>1$ such that 
	$$u_\eps\le 0\qquad(|x|\ge R_\eps)$$
	and moreover, $R_\eps\to\infty$ as $\eps\to 0$.
	
	Set $B[1,R_\eps{[}\,=\{x\in\R^N:1<|x|<R_\eps\}$. 
	Since $w>0$, we conclude that
	$$u_\eps\le cw\quad\text{on $\partial B[1,R_\eps[$}.$$
	By the comparison principle (that in the case $p=2$ follows from the maximum principle of Theorem \ref{thmMP} by additivity) we conclude that 
	$$u_\eps\le cw\quad\text{in $B[1,R_\eps[$}.$$
	Then the assertion follows, since $\eps>0$ could be taken arbitrary small and taking into account that $R_\eps\to\infty$ as $\eps\to 0$.
	\end{proof}
	
The following Phragm\'{e}n--Lindel\"of Alternative can be derived as  an immediate corollary of Lemma \ref{lem-small}. Despite its simplicity, it provides a powerful tool for the analysis of sub and supersolutions to linear and nonlinear equations, cf. \cite{LLM,Bandle}. 
	
\begin{prop}[Phragm\'{e}n--Lindel\"of Alternative for $p=2$]\label{PL-2} 
	Assume that $p=2$ and $(*)$ holds. Assume that  $u\ge 0$ is a subsolution of \eqref{e1} in $B_1^c$. 	
	Then  either of the following alternatives occurs: 
	\begin{itemize}
		\item[$i)$]  
		for any positive supersolution $w$ of \eqref{e1} in $B_1^c$, 
		\begin{equation}\label{c-super1-2}
			u\lesssim w\quad\text{in $B_1^c$}
		\end{equation}
		\item[$ii)$] 
		for any positive supersolution $w$ of \eqref{e1} in $B_1^c$, 
		$u$ is not  smaller than $w$, i.e.
		\begin{equation}\label{limitesup-1}
			\limsup_{|x|\to\infty}\frac{u(x)}{w(x)}>0.
		\end{equation}
	\end{itemize} 
\end{prop}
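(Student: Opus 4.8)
The plan is to deduce the alternative directly from Lemma \ref{lem-small} by a dichotomy on whether the hypothesis \eqref{a-small} holds for the given subsolution $u$. Either there exists a positive supersolution $v$ of \eqref{e1} in $B_1^c$ such that $u$ is smaller than $v$, or there does not; I would treat these two possibilities in turn.

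In the first case, \eqref{a-small} is satisfied, so Lemma \ref{lem-small} applies verbatim and gives $u\lesssim w$ in $B_1^c$ for \emph{every} positive supersolution $w$ of \eqref{e1} in $B_1^c$; this is precisely alternative $i)$. In the second case, by the very negation of \eqref{a-small}, no positive supersolution $w$ of \eqref{e1} in $B_1^c$ has the property that $u$ is smaller than $w$, i.e. for each such $w$ the ratio $u(x)/w(x)$ does not tend to $0$ as $|x|\to\infty$. Since $u\ge 0$ and $w>0$ on $B_1^c$, this ratio is non-negative, so its $\liminf$ and $\limsup$ at infinity are both $\ge 0$; were $\limsup_{|x|\to\infty}u(x)/w(x)=0$, the ratio would converge to $0$, a contradiction. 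Hence $\limsup_{|x|\to\infty}u(x)/w(x)>0$ for every positive supersolution $w$, which is alternative $ii)$. As the two cases are mutually exclusive and jointly exhaustive, the proposition follows.

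There is no genuine obstacle here: all the substance is contained in Lemma \ref{lem-small}, whose role is to promote domination by \emph{one} positive supersolution to domination by \emph{every} positive supersolution, thereby making the two stated alternatives clean negations of one another. The only elementary point to record is the equivalence, valid because $u\ge 0$ and $w>0$ on $B_1^c$, between the statement that $u$ is not smaller than $w$ and the inequality $\limsup_{|x|\to\infty}u(x)/w(x)>0$.
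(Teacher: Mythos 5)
Your argument is correct and is essentially the paper's own proof: the paper likewise observes that alternative $i)$ is just Lemma \ref{lem-small} when \eqref{a-small} holds, while alternative $ii)$ is the negation of \eqref{a-small}. Your extra remark that, since $u\ge 0$ and $w>0$, failure of $u(x)/w(x)\to 0$ is equivalent to $\limsup_{|x|\to\infty}u(x)/w(x)>0$ is a correct (and harmless) elaboration of the same step.
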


\begin{proof}
$i)$ is simply the reformulation of Lemma \ref{lem-small}, while $ii)$ is the negation of \eqref{a-small}.
\end{proof}

For related results concerning linear equations with possibly changing sign potentials we refer to \cite{vitolo} and the references therein. 

Clearly, when $p\neq 2$ the construction of barriers as in \eqref{e-barrier} is no longer available, unless $V\le 0$. Indeed, if $V\le 0$ then it is possible to use the constant supersolution $v=1$ and obtain the barrier $u_{\epsilon}=cu-\epsilon$ for $\epsilon >0$. It is straightforward that if $V\le 0$ and $u$ is a subsolution of \eqref{e1} then  also $u_{\epsilon}$ is a subsolution. Thus, by applying the same method above one can prove that if $V\le 0$ and $u$ is an infinitesimal subsolution then the same conclusion of Lemma~\ref{lem-small} holds. 

Note that in this case, the second alternative does not hold since \eqref{limitesup-1} is false  for $w=1$ if $u$ is infinitesimal.  This and Remark~\ref{largeVnegative} give criteria to tell if $u$ is a `small' or a `large' subsolution as in the linear case. 

Unfortunately, this argument does not work if $V\geq 0$ since $v=1$ is not a supersolution, unless $V=0$. For this reason, in this paper we focus on potentials $V$ satisfying the sign condition $V\geq 0$. Under this hypothesis, we prove a number of results that we briefly describe in the following lines.

In Section~\ref{section2} we are going to show that some partial additivity for \eqref{e1} can be recovered in a form of a superposition principle  for radial functions, see Theorem~\ref{superposition}.  The proof of  Theorem~\ref{superposition} is based on a convexity argument, see Lemma~\ref{simpleineq}. 
We believe that this result has its own interest since very few superposition results are available in the literature for the $p$-Laplacian, see \cite{bru17, bru20, crajia, linman}. 
 Then in Section 3 we employ this superposition principle in order to prove a restricted version of the Phragm\'{e}n--Lindel\"of Alternative for the $p$-Laplacian with $p\geq 2$, see Theorems~\ref{c-super}, \ref{c-super-2}. 
 The main restriction in our version of this principle consists in  the fact that the  subsolution $u$ is assumed to be either radial and decreasing in Theorem~\ref{c-super} or radial and increasing in Theorem~\ref{c-super-2}. Since our results allow to provide lower and upper bounds for supersolutions not necessarily radial and it is very natural to use radial functions to estimate the growth of functions at infinity, we believe that our assumptions are not very restrictive in applications. In fact, certain supersolutions that appear in our theorems  could be defined on sufficiently smooth outer domains, not necessarily radial, see Remark~\ref{genouter}. In Section 4 we use our results to deduce two--sided bounds on the decay of supersolutions of \eqref{e1} in the case of Hardy type potentials that recover and improve estimates in \cite{LLM}: first we consider the case of pure Hardy potentials in Section~\ref{puresubsec} and then the case of improved Hardy potentials in Section~\ref{improvedsubsec}.  

Finally, we note that by using our results we also recover an estimate proved in \cite[Thm.~5.4]{lampin}, see Remark~\ref{lampinrem} for details.

\section{A superposition principle for $p$-Laplace equations}
\label{section2}

In this section we establish a superposition principle for radial subsolutions and supersolutions of $p$-Laplace equations involving  non-positive potentials. 

Given a radially symmetric domain $\Omega$ in $\mathbb{R}^N$,  we consider   equation \eqref{e1}  with  a non-negative potential $V$  in $\Omega$. 
Recall that for a radially symmetric function $u$ of the type $u(x) =\phi (r)$ where $r=|x|$ and 
$\phi $ is a function of the one real variable $r$, the $p$-Laplacian can be written in spherical coordinates as 
\begin{equation}
\label{mainbis}
\Delta_p(\phi (r) )= |\phi'(r)|^{p-2}L(\phi (r))
\end{equation}
where $L$ is the ordinary differential operator defined by 
\begin{equation}
\label{polar}
L(\phi (r)):= (p-1)\phi''(r) +\frac{N-1}{r}\phi'(r)\, .
\end{equation}

Note that for the sake of simplicity, in what follows we  identify a radial function $u$ as above with $\phi$. Thus, we shall also write $u(r)$ instead of $\phi (r)$, and $u'(r)$ instead of  $\phi'(r)$.   

The notions of solutions, subsolutions and supersolutions of equation \eqref{e1} are standard: usually, one considers functions $u$ in the Sobolev space $W^1_{p,loc}(\Omega)$ and defines those notions by using the appropriate variational weak formulations involving positive test functions. In that framework, keeping in mind the regularity theory for $p$-Laplace equations, one typically assumes that $u$ is a function of class $C^1$ or $C^{1,\gamma}$ for some $\gamma \in ]0,1]$.   However, for technical reasons, we shall  assume that our solutions, sub and supersolutions are of class $C^2$ and the equations will be understood in the classical sense.  We note that this assumption is not much restrictive since here we are concerned with strictly monotone functions, in which case the derivative $\phi '(r)$ does not vanish.  Thus the coefficient $|\phi' (r)|^{p-2}$ in front of the linear operator $L$ in \eqref{mainbis} is locally bounded away from zero and infinity and, in view of classical regularity theory, one expects more regularity for solutions.

The proof of our superposition result is based on the linearity of the operator $L$ above and on a convexity argument involving the nonlinear factor $|\phi '|^{p-2}$. This argument relies on  the following lemma. 

\begin{lem}\label{simpleineq} For all  non-negative real numbers $a,b$ and all positive real numbers $c,d$ the following inequalities hold:
\begin{itemize}
\item[(i)] if $q\geq 1$ then 
\begin{equation}\label{ine1}
\frac{(a+b)^{q}}{(c+d)^{q-1}}\le \frac{a^{q}}{c^{q-1}}  + \frac{b^{q}}{d^{q-1}};
\end{equation}
\item[(ii)] if $0< q\le 1$ then 
\begin{equation}\label{ine2}
\frac{(a+b)^{q}}{(c+d)^{q-1}}\geq \frac{a^{q}}{c^{q-1}}  + \frac{b^{q}}{d^{q-1}}.
\end{equation}
\end{itemize}
Moreover, for $q\ne 1$ equalities occur if and only if $ad=bc$.
\end{lem}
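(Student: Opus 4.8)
The plan is to reduce the two-variable inequality to a one-variable convexity statement by a suitable normalization. First I would observe that the inequality is homogeneous of degree $1$ in the pair $(a,c)$ and, separately, in the pair $(b,d)$: replacing $(a,c)$ by $(\lambda a,\lambda c)$ multiplies every term by $\lambda^q/\lambda^{q-1}=\lambda$, and similarly for $(b,d)$. Hence, after scaling, I may assume $c+d=1$, say $c=t$ and $d=1-t$ with $t\in(0,1)$, and the claim becomes
\[
(a+b)^q \;\lessgtr\; \frac{a^q}{t^{q-1}}+\frac{b^q}{(1-t)^{q-1}},
\]
with ``$\le$'' when $q\ge 1$ and ``$\ge$'' when $0<q\le 1$. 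The natural device is then to write $a = t\cdot\frac{a}{t}$ and $b=(1-t)\cdot\frac{b}{1-t}$ and recognize the right-hand side as $t\,f(a/t)+(1-t)\,f(b/(1-t))$ where $f(s)=s^q$, while the left-hand side is $f\big(t\cdot\frac{a}{t}+(1-t)\cdot\frac{b}{1-t}\big)$. Thus the inequality is exactly Jensen's inequality for the function $f(s)=s^q$ on $[0,\infty)$ applied to the convex combination with weights $t$ and $1-t$ of the points $a/t$ and $b/(1-t)$.

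The next step is to invoke convexity/concavity of $f$: the function $s\mapsto s^q$ is convex on $[0,\infty)$ when $q\ge 1$ (its second derivative $q(q-1)s^{q-2}\ge 0$), which gives \eqref{ine1}, and concave when $0<q\le 1$ (second derivative $\le 0$), which gives \eqref{ine2}. One should be mildly careful at the endpoints where $a$ or $b$ vanishes, or where $s^{q-2}$ is singular at $s=0$ for $q<2$, but $f$ is continuous on $[0,\infty)$ and the strict convexity/concavity holds on $(0,\infty)$, so Jensen's inequality and its equality case go through by continuity. For the equality statement when $q\ne 1$, strict convexity (resp. strict concavity) of $f$ on $(0,\infty)$ forces the two points of the convex combination to coincide, i.e. $a/t=b/(1-t)$, which after undoing the normalization $c=t$, $d=1-t$ is precisely $a(c+d)^{-1}=b(c+d)^{-1}$ rescaled, that is $ad=bc$; conversely if $ad=bc$ then $a/c=b/d$ and a direct substitution shows both sides are equal. (The degenerate cases $a=0$ or $b=0$ are handled separately: then $ad=bc$ forces $b=0$ or $a=0$ as well since $c,d>0$, and indeed equality holds only in that trivial subcase.)

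The only real obstacle I anticipate is bookkeeping at the boundary of the domain — ensuring the normalization $c+d=1$ is legitimate (it is, since $c,d>0$) and that the equality analysis correctly tracks the cases where $a$ or $b$ is zero. An alternative, more computational route that avoids Jensen entirely is to fix $c,d$ and minimize (resp. maximize) the right-hand side over all splittings, or equivalently to set $a=s$, $b=$ fixed and differentiate; but the Jensen argument is cleaner and yields the equality case essentially for free. I would therefore present the Jensen proof as the main argument and, if desired, remark that \eqref{ine1}–\eqref{ine2} are a weighted form of the elementary inequality $(x+y)^q \le 2^{q-1}(x^q+y^q)$ for $q\ge 1$ (with the reverse for $q\le 1$), recovered by taking $c=d$.
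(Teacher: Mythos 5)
Your proof is correct and follows essentially the same route as the paper: both rewrite the right-hand side as a convex combination with weights $\tfrac{c}{c+d}$ and $\tfrac{d}{c+d}$ and apply Jensen's inequality (convexity/concavity of $f(s)=s^{q}$), with the equality case coming from strict convexity, the only difference being that the paper normalizes by dividing through by $b$ and $d$ (setting $x=a/b$, $y=c/d$) rather than by assuming $c+d=1$. One small correction: the inequality is \emph{not} homogeneous separately in $(a,c)$ and $(b,d)$ as you claim (scaling $(a,c)\mapsto(\lambda a,\lambda c)$ does not scale the left-hand side $\tfrac{(\lambda a+b)^q}{(\lambda c+d)^{q-1}}$ by $\lambda$), but joint homogeneity of degree one in $(a,b,c,d)$ already justifies the normalization $c+d=1$, so your argument is unaffected.
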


{\bf Proof.}  If $a=0$ or $b=0$ then the statement is trivial. Thus we assume that $a,b>0$. 
We note that,  by setting $x=a/b$ and $y=c/d$,  proving inequalities \eqref{ine1} and \eqref{ine2} is equivalent to proving that 
\begin{itemize}
\item[(i)] if $q\geq 1$ then 
\begin{equation}\label{ine1bis}
\left(\frac{x+1}{y+1}\right)^q     \le    \frac{y}{y+1}\left(   \frac{x}{y}\right)^q  + \frac{1}{y+1};
\end{equation}
\item[(ii)] if $0<q\le 1$ then 
\begin{equation}\label{ine2bis}
\left(\frac{x+1}{y+1}\right)^q     \geq   \frac{y}{y+1}\left(   \frac{x}{y}\right)^q  + \frac{1}{y+1},
\end{equation}
\end{itemize}
respectively. 

We set $\theta =\frac{y}{y+1}$ and we note that   $1-\theta = \frac{1}{y+1}$ and
\begin{equation}
\label{convexcomb}
\frac{x+1}{y+1}= \theta\cdot  \frac{x}{y} +(1-\theta) \cdot 1
\end{equation}

Thus, setting $f(t)=t^q$ for any $t\geq 0$, inequality \eqref{ine1bis} can be written as 
$$
f\left(\frac{x+1}{y+1}\right) \le \theta f\left(\frac{x}{y} \right)+(1-\theta )f(1)
$$
which is true by the convexity of $f$ when $q\geq 1$. Inequality \eqref{ine2bis} follows in the same way by the concavity of $f$ when $0<q\le 1$.  

By the strict convexity and concavity of $f$ for, it follows that the inequalities    \eqref{ine1bis}, \eqref{ine2bis} are strict unless 
$(x+1)/(y+1)=
x/y= 1
$
which means that $x=y$, or equivalently  $ad=bc$.
\qed

We are now ready to prove our superposition result. Note that for this results, it is not required that the subsolutions or the supersolutions are regular up to the boundary of $\Omega$.

\begin{thm}\label{superposition} Let $p\geq 2$, $\Omega$ be a radially symmetric domain in $\mathbb{R}^N$ and $V\geq 0$. Let   $u$, $v$ be radial functions of class $C^2$ in $\Omega$ such that  
\begin{equation}
0< v \le u  \ \ {\rm in}\ \Omega 
\end{equation}
and 
\begin{equation}
 u'\le v' < 0\ \ {\rm in}\ \Omega \ \ {\rm or}\ \ \  u'\ge  v'> 0\ {\rm in}\ \Omega  . 
\end{equation}
If $u$ is a subsolution and $v$ is a supersolution of equation \eqref{e1}  then 
$u-v$ is a subsolution of equation \eqref{e1}. 
\end{thm}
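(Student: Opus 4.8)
The plan is to transfer everything to the one-dimensional linear operator $L$ of \eqref{polar} via the representation \eqref{mainbis}, and then to combine the sub/supersolution inequalities through the algebraic Lemma~\ref{simpleineq}. First I would dispose of $p=2$: there $|u|^{p-2}u=u$, so $-\Delta_2(u-v)-V(u-v)=\big(-\Delta_2 u-Vu\big)-\big(-\Delta_2 v-Vv\big)\le 0$ by plain additivity, and henceforth I assume $p>2$. Set $w:=u-v$, a radial $C^2$ function with $w\ge 0$ in $\Omega$. The elementary but decisive observation, which is exactly where the monotonicity hypotheses are used, is that in \emph{both} admissible cases one has the pointwise identities
\[
u=w+v,\qquad |u'|=|w'|+|v'|\quad\text{in }\Omega,
\]
and moreover $|u'|,|v'|>0$ everywhere since $u',v'$ never vanish. (In the case $u'\le v'<0$ one has $w'\le 0$ and $|w'|=-u'+v'=|u'|-|v'|$; in the case $u'\ge v'>0$ one has $w'\ge 0$ and $|w'|=u'-v'=|u'|-|v'|$.)

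Next I would use \eqref{mainbis} together with the linearity of $L$, i.e. $L(w)=L(u)-L(v)$ and $L(u)=\Delta_p u/|u'|^{p-2}$, $L(v)=\Delta_p v/|v'|^{p-2}$ (legitimate since $|u'|,|v'|>0$), to write at every point of $\Omega$
\[
\Delta_p w=|w'|^{p-2}L(w)=|w'|^{p-2}\left(\frac{\Delta_p u}{|u'|^{p-2}}-\frac{\Delta_p v}{|v'|^{p-2}}\right).
\]
Inserting the subsolution inequality $\Delta_p u\ge -Vu^{p-1}$ and the supersolution inequality $\Delta_p v\le -Vv^{p-1}$, and using that $|w'|^{p-2}\ge 0$, $|u'|^{p-2},|v'|^{p-2}>0$, this gives
\[
\Delta_p w\ \ge\ V\,|w'|^{p-2}\left(\frac{v^{p-1}}{|v'|^{p-2}}-\frac{u^{p-1}}{|u'|^{p-2}}\right).
\]

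It then remains to bound the bracket from below, and here Lemma~\ref{simpleineq}(i) with $q=p-1\ge 1$, $a=w\ge 0$, $b=v>0$, $c=|w'|>0$, $d=|v'|>0$, together with $u=w+v$ and $|u'|=|w'|+|v'|$, yields
\[
\frac{u^{p-1}}{|u'|^{p-2}}=\frac{(w+v)^{p-1}}{(|w'|+|v'|)^{p-2}}\ \le\ \frac{w^{p-1}}{|w'|^{p-2}}+\frac{v^{p-1}}{|v'|^{p-2}},
\]
i.e. $\tfrac{v^{p-1}}{|v'|^{p-2}}-\tfrac{u^{p-1}}{|u'|^{p-2}}\ge -\tfrac{w^{p-1}}{|w'|^{p-2}}$. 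Since $V\ge 0$, multiplying by $V|w'|^{p-2}$ and inserting into the previous display gives $\Delta_p w\ge -Vw^{p-1}$ at every point where $w'\ne 0$, that is $-\Delta_p w-V|w|^{p-2}w\le 0$ there. At any point where $w'=0$ one has $\Delta_p w=|w'|^{p-2}L(w)=0$ (this is where $p>2$ is used), and $0\ge -Vw^{p-1}$ holds trivially because $V\ge 0$ and $w\ge 0$; so the subsolution inequality holds on all of $\Omega$, completing the proof.

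The only genuinely delicate points are: (a) recognizing that the two differential inequalities must be combined after passing through $L$ and then ``re-homogenized'' by the factor $|w'|^{p-2}$, since adding them directly loses too much; (b) bookkeeping of the degenerate set $\{w'=0\}$, which is harmless precisely because $p>2$; and (c) noting that the positivity and monotonicity assumptions are exactly what make the identities $u=w+v$, $|u'|=|w'|+|v'|$ valid, so that the shape of Lemma~\ref{simpleineq} matches the quantities at hand. None of these is expected to present a serious obstacle.
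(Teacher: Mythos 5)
Your proof is correct and follows essentially the same route as the paper's: pass to the linear operator $L$ via \eqref{mainbis}, apply Lemma~\ref{simpleineq}(i) with $q=p-1$ to the quantities $u-v$, $v$, $|u'-v'|$, $|v'|$ (your identities $u=w+v$, $|u'|=|w'|+|v'|$ are exactly the substitution the paper makes), and treat the degenerate set $\{u'=v'\}$ separately using $p>2$. The only difference is presentational — you phrase the combination as a lower bound on $\Delta_p w$ rather than an upper bound on $-L(u-v)$ — so there is nothing to add.
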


{\bf Proof.} We assume directly that $p>2$ since for $p=2$ the statement is trivial. Since $u$ is a subsolution and $v$ a supersolution, we have that 
\begin{equation} \label{subsup}
\left\{ 
\begin{array}{l}
-L(u ) \le V \frac{u^{p-1}}{|u '|^{p-2}}\\
L(v) \le  -V \frac{v^{p-1}}{|v'|^{p-2}}\, . 
\end{array}
\right.
\end{equation} 
If $x\in\Omega$ is such that $u'(x)=v'(x)$ then it is straightforward that 
 $$
0=  - |u '(x) -v'(x) |^{p-2}L(u(x) -v(x))\le  V(x)   (u(x)-v(x) )^{p-1}
  $$
  because $V(x)\geq 0$. 
  
  Now we consider functions $u$ and $v$ on the open subset of $\Omega$ where $u'\ne v'$. 
By the assumptions on the signs of $u-v$ and $u'-v'$, it is possible to apply  inequality \eqref{ine1}  with $q=p-1$, 
$a=u -v$, $b=v$ and  $c=u' -v'$, $d=v'$  if $u'> v' > 0$  or   $c= v'-u'$, $d=-v'$ if $u'<v' < 0$,  to obtain by \eqref{subsup} that 
\begin{eqnarray}\lefteqn{
-L(u -v)\le V\left( \frac{u^{p-1}}{|u '|^{p-2}}  - \frac{v^{p-1}}{|v'|^{p-2}}\right)  }\nonumber \\
& & 
 = V\left( \frac{  (u-v +v)^{p-1}}{|u ' -v' +v'|^{p-2}}  - \frac{v^{p-1}}{|v'|^{p-2}}\right) \nonumber  \\
 & &  
 \le   V\left( \frac{  (u-v )^{p-1}}{|u ' -v' |^{p-2}} + \frac{v^{p-1}}{|v'|^{p-2}}  - \frac{v^{p-1}}{|v'|^{p-2}}\right)=
  V \frac{  (u-v )^{p-1}}{|u ' -v' |^{p-2}} .
  \end{eqnarray}
 This  implies that 
  $$
  - |u ' -v' |^{p-2}L(u -v)\le  V   (u-v )^{p-1}, 
  $$
hence $u -v $ is a subsolution.  \qed

\begin{rem} Although it is not used in this paper, we note that, in the case $1\le p\le 2$  the following superposition result hold under the same assumption $V\geq 0$.

{\it Assume that
\begin{equation}
0< v \le u  \ \ {\rm in}\ \Omega 
\end{equation}
and 
\begin{equation}
 u'< v' < 0\ \ {\rm in}\ \Omega \ \ {\rm or}\ \ \  u'> v'> 0\ {\rm in}\ \Omega  . 
\end{equation}
If $u$ is a supersolution  and $v$  a subsolution of \eqref{e1}  then
$u-v$ is a supersolution of \eqref{e1}. }

Indeed, by proceeding as in the proof of the previous theorem, we note that 
 since $u$ is a supersolution and $v$ a subsolution, we have that 
\begin{equation} \label{subsup-2}
\left\{ 
\begin{array}{l}
L(u ) \le -V \frac{u^{p-1}}{|u '|^{p-2}}\\
-L(v) \le  V \frac{v^{p-1}}{|v'|^{p-2}}
\end{array}
\right.
\end{equation}
Again, by the assumptions on the signs of $u-v$ and $u'-v'$, it is possible to apply  inequality \eqref{ine2}  with 
$a=u -v$, $b=v$, $c=u' -v'$, $d=v'$  if $u'> v' > 0$  or   $c= v'-u'$, $d=-v'$ if $u'<v' < 0$,  to obtain that 
\begin{eqnarray}\lefteqn{
L(u -v)\le -V\left( \frac{u^{p-1}}{|u '|^{p-2}}  - \frac{v^{p-1}}{|v'|^{p-2}}\right)  }\nonumber \\
& & 
 = -V\left( \frac{  (u-v +v)^{p-1}}{|u ' -v' +v'|^{p-2}}  - \frac{v^{p-1}}{|v'|^{p-2}}\right)\nonumber  \\
 & &  
 \le   -V\left( \frac{  (u-v )^{p-1}}{|u ' -v' |^{p-2}} + \frac{v^{p-1}}{|v'|^{p-2}}  - \frac{v^{p-1}}{|v'|^{p-2}}\right)=
-  V \frac{  (u-v )^{p-1}}{|u ' -v' |^{p-2}} .
  \end{eqnarray}
 This  implies that 
  $$
  - |u ' -v' |^{p-2}L(u -v)\geq  V   (u-v )^{p-1}, 
  $$
hence $u -v $ is a supersolution.
\end{rem}

\section{Phragm\'{e}n-Lindel\"{o}f alternative}

In this section we prove  a version of the Phragm\'{e}n-Lindel\"{o}f alternative: in Theorem~\ref{c-super} we consider the case of decreasing subsolutions, in Theorem~\ref{c-super-2} the case of increasing subsolutions. To do so,  we also need Lemma~\ref{nomax} which allows us to control the monotonicity of  the ratio bewteeen monotone sub and supersolutions.

In what follows, for any $r_0, R_0\in ]0, \infty ]$ with $r_0<R_0$ we denote by $B]r_0, R_0[$  the open annulus defined by 
$$
B]r_0, R_0[=\{x\in \R^N: \ r_0<|x|< R_0 \}
$$
and by  $B[r_0, R_0[$ the semi-closed annulus defined by
$$
B[r_0, R_0[=\{x\in \R^N: \ r_0\le |x|< R_0 \}\, .
$$
As usual, for $r>0$ we set $S_r=\{x\in \R^N: |x|= r\}$.  We note that in this paper we are mainly interested in the case $R_0=\infty$ that is discussed in detail in the next session for Hardy-type potentials. However, the statements in this section include all cases $0<R_0\le \infty$. 

We shall consider equation \eqref{e1} with $\Omega =B]r_0, R_0[ $. In this case, when we say that a function $u$ is a $C^2$-subsolution, supersolution or solution we understand that  $u$ is  of class $C^2(B[r_0,R_0))$. In other words, we require the regularity of $u$ up  to $S_{r_0}$ but not up to $S_{R_0}$ if $R_0$ is finite.

Moreover, we say that $u$ is a strict subsolution of \eqref{e1} if 
$$
-\Delta_p u(x)-V(x)|u(x)|^{p-2}u(x)<0,\ \ \forall x\in \Omega .
$$
Similarly, we say that $v$ is  a strict supersolution 
of \eqref{e1} if
$$
-\Delta_p v(x)-V(x)|v(x)|^{p-2}v(x)>0,\ \ \forall x\in \Omega .
$$

For real-valued radially symmetric $C^1$--functions $u$ defined on  $B[r_0, R_0[$  we say that  $u$ is {\it strictly monotone decreasing} if $u'(r)<0$ for any $r\in[r_0, R_0[$, and $u$ is monotone non--increasing if $u'(r)\le 0$ for any $r\in[r_0, R_0[$. Similarly for monotone strictly monotone increasing/monotone non-decreasing.
It is also convenient for us to introduce further terminology and notation. 

\begin{defn}\label{def-smaller}
	Let $u, v$ be two positive radial $C^1$--functions defined on an annulus $B[r_0, R_0[$.  We say that:
	
	\begin{itemize}
		\item[(i)]
		$u$ is {\it dominated} by $v$ and write  
		$$u\lesssim v\quad\text{in $B[r_0, R_0[$},$$
		if  there exists $C>0$ such that $u(r)\le C v(r)$ for all $r\in[r_0, R_0[$  ;
	
\item	[(ii)]
	$u$ is {\it smaller} than $v$ if
	$$
	u(r)=o(v(r)) \quad {\rm as}\  r\to R_0,
	$$
	i.e. for every $\eps>0$ there exists $R_\eps\in[r_0, R_0[$ such that $u(r)\le \eps v(r)$ for all $r\in[R_\eps, R_0[$;
	
	\item[(iii)]
	$u$ is {\it monotonically smaller} than $v$ if it is smaller than $v$ and there exists $r_1\in [r_0,  R_0 [$ such that $u/v$ is monotone non-increasing  on $[r_1, R_0[$.
	
\end{itemize}
	
\end{defn}

\begin{lem}\label{super} 
	Let  $p\geq 2$, $\Omega = B]r_0, R_0[$ be an annulus  with  $0<r_0<R_0\le \infty $ and  $V\geq 0$. Assume that  condition $(*)$ holds.    Assume that  $u$ is a positive radial strictly monotone decreasing $C^2$-subsolution of \eqref{e1} and that there exists a positive radial strictly monotone decreasing $C^2$-supersolution $v$ of \eqref{e1}  such that $u$ is monotonically smaller than $v$.
	Then 
	$$u\lesssim w\quad\text{in $B[r_0, R_0[$}$$
	for any positive (not necessarily radial or decreasing) $C^2$-supersolution
\end{lem}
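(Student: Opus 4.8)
The plan is to adapt the barrier argument from Lemma~\ref{lem-small} (the case $p=2$), replacing the missing additivity with the superposition principle of Theorem~\ref{superposition}. Fix a positive $C^2$-supersolution $w$ of \eqref{e1} in $B]r_0, R_0[$ (not necessarily radial). Since $w > 0$ on the compact sphere $S_{r_0}$ and $u$ is continuous there, we may choose $c > 0$ such that $u \le c\, w$ on $S_{r_0}$. For small $\eps > 0$ consider the radial barrier $u_\eps := u - \eps v$. The first key point is that $u_\eps$ is a subsolution of \eqref{e1} in (a suitable sub-annulus of) $B]r_0, R_0[$: here one applies Theorem~\ref{superposition} to the pair $\eps v$ (a supersolution, being a positive multiple of a supersolution for the $p$-Laplacian, since $\Delta_p$ and the term $V|u|^{p-2}u$ are both positively homogeneous of degree $p-1$) and $u$ (a subsolution). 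To invoke the theorem we need $0 < \eps v \le u$ and $u' \le \eps v' < 0$ on the relevant region. This is where the hypothesis ``$u$ is \emph{monotonically} smaller than $v$'' enters: since $u/v$ is non-increasing on $[r_1, R_0[$ for some $r_1$, we have $(u/v)' \le 0$ there, i.e. $u'v - uv' \le 0$, equivalently $u'/u \le v'/v < 0$ (both $u,v$ decreasing); combined with $u \ge \eps v$ this should yield $u' \le \eps v'$ on $[r_1, R_0[$ once $\eps$ is small enough relative to the value $u(r_1)/v(r_1)$. (One may need to first shrink to $r \ge r_1$ and separately handle $[r_0, r_1]$, or note that $u/v$ being bounded below by a positive constant on the compact $[r_0,r_1]$ lets one absorb that part into the constant $c$.)

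Next, because $u$ is smaller than $v$, for each small $\eps$ there is $R_\eps \in [r_1, R_0[$ with $u(r) \le \eps v(r)$, i.e. $u_\eps \le 0$, for all $r \in [R_\eps, R_0[$; moreover $R_\eps \to R_0$ as $\eps \to 0$. Thus on the annulus $B[r_1, R_\eps[$ we have $u_\eps \le c\,w$ on the inner boundary $S_{r_1}$ (after adjusting $c$ to dominate $u$, hence $u_\eps$, on $S_{r_1}$ using that $S_{r_1} \subset\subset B]r_0,R_0[$ so $w>0$ there) and $u_\eps \le 0 \le c\,w$ on the outer boundary $S_{R_\eps}$. Since $B[r_1, R_\eps[$ is a \emph{bounded} smooth domain, $u_\eps$ is a subsolution there, and $c\,w$ is a non-negative supersolution there (positive multiples of supersolutions are supersolutions), the comparison principle of Theorem~\ref{thmCP} applies — note condition $(*)$ holds by hypothesis — and gives $u_\eps \le c\,w$ in $B[r_1, R_\eps[$. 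Equivalently $u \le c\,w + \eps v$ on $B[r_1, R_\eps[$.

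Finally, fix any $r \in [r_1, R_0[$. For every $\eps$ small enough that $R_\eps > r$, the previous inequality gives $u(r) \le c\,w(x) + \eps v(x)$ for all $x$ with $|x| = r$; letting $\eps \to 0$ yields $u(r) \le c\,w(x)$ on $S_r$. Since $r \in [r_1, R_0[$ was arbitrary, and on the compact region $r_0 \le |x| \le r_1$ the ratio $u/w$ is bounded (both continuous, $w > 0$), we conclude $u \lesssim w$ in $B[r_0, R_0[$ after possibly enlarging the constant.

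The main obstacle I anticipate is the bookkeeping around the monotonicity hypothesis: verifying that the derivative condition $u' \le \eps v' < 0$ required by Theorem~\ref{superposition} genuinely follows from ``$u/v$ non-increasing near $R_0$'' together with $u \ge \eps v$, and handling the transition region $[r_0, r_1]$ where monotonicity of the ratio is not assumed — this is why one works on $B[r_1, R_\eps[$ rather than $B[r_0, R_\eps[$ and absorbs the compact piece into the constant. A secondary technical point is justifying that $\eps v$ and $c\,w$ are again super/subsolutions, which rests only on the positive $(p-1)$-homogeneity of the operator and is routine.
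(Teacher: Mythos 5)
Your proposal is correct and follows essentially the same route as the paper's proof: the barrier $u_\eps=u-\eps v$, the verification $0<\eps v\le u$ and $u'\le\eps v'<0$ on $B[r_1,R_\eps[$ via the monotonicity of $u/v$ (the paper's computation is exactly your $u'/u\le v'/v$ combined with $u\ge\eps v$), the application of Theorem~\ref{superposition} (with $\eps v$ a supersolution by homogeneity) and of the comparison principle of Theorem~\ref{thmCP} on the bounded annulus, and the passage $\eps\to 0$ using $R_\eps\to R_0$, with the region $[r_0,r_1]$ absorbed into the constant. No substantive differences.
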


\proof  
Let $r_1$ be as in Definition~\ref{def-smaller} (iii). 
 Since $u$ is monotonically smaller than $v$, $R_\eps:=\inf\{\rho\in[r_1, R_0[\;: u(r)\le\eps v(r)\ \forall r\geq \rho\}$ is well-defined and $R_\eps:\R_+\to [r_1, R_0[$ is a non-increasing function of $\eps>0$. 
Moreover, $\lim_{\eps\to 0}R_\eps=R_0$ since $u$ and $v$ are positive in $B[r_0, R_0[$  and $u$ is smaller than $v$. 

Let $\epsilon>0$ be small enough to guarantee that 
$R_{\epsilon}>r_1$.  By the definition of $R_\eps$ and monotonicity we have
\begin{equation*}
0< \eps v \le u \quad\text{in $B[r_1, R_\eps[$}.
\end{equation*}
Further, since $u/v$ is monotone decreasing its derivative is negative and hence
$$v'(r)u(x)\geq v(r)u'(r)\quad\forall r\in[r_1,R_0[.$$
Taking into account that $v'$ is negative, we conclude
$$
\frac{u'(r)}{v'(r)}\geq \frac{u(r)}{v(r)}\ge\eps\quad\forall r\in [r_1, R_{\eps}[.
$$
It follows that 
$$
u'(r) \le  \eps v'(r) < 0 \quad \forall r\in [r_1, R_{\eps}[.
$$
Thus Theorem \ref{superposition} can be used and we conclude that $u-\eps v$ is a nonnegative subsolution of of \eqref{e1} in $B[r_1, R_\eps[$. By construction, we also have that  $u-\eps v=0$ on $S_{R\eps}$.
Let $C>0$ be fixed in such a way that $u\le Cw$ on $B[r_0,r_1]$.
Then $u-\eps v \le Cw$ on $\partial B[r_0, R_\eps[$. Thus, by the Comparison Principle  \cite[Theorem~5]{Garcia-Melian}  (restated in  Theorem~\ref{thmCP})  it follows that 
\begin{equation}\label{inepsilon}
	u(x)-\eps v(x)\le Cw(x)\quad\forall x\in B[r_0, R_\eps].
\end{equation} 
Since $R_{\eps }\to R_0$ as $\eps \to 0$ and $C$ is independent of $\eps$, we can pass to the limit as $\eps \to 0 $ in \eqref{inepsilon} and conclude that $u(x)\le Cw(x)$ for all $x\in B[r_0, R_0[$.
\qed   \\

The proof of the previous theorem is based on the  monotonicity of the function $u/v$. The second step in the proof of the Phragm\'{e}n-Lindel\"{o}f alternative consists in removing that assumption. This is done by means of the following lemma which can be considered as a Maximum Principle for the quotient between a subsolution and a supersolution. 

\begin{lem}[Maximum Principle for the quotient]\label{nomax}
Let $p> 1$ and $\Omega = B]r_0, R_0[$ be an annulus  with  $0<r_0<R_0\le \infty $. Assume that  $u$ is a positive radial strictly monotone $C^2$--subsolution and $v$ is a positive radial strictly monotone $C^2$--supersolution of \eqref{e1}.  Assume in addition 
that either $u$ is a strict subsolution or $v$ is a strict supersolution.  
If $( u/ v)'(\rho)=0$ for some $\rho \in ]r_0,R_0[$ then $( u/ v)''(\rho)>0$. In particular, $u/v$ cannot have a local maximum in $B]r_0, R_0[$. 
\end{lem}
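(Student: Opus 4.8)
The plan is to compute $(u/v)''$ at a critical point $\rho$ of the quotient and show that the sign of the result is forced to be positive by the sub/supersolution inequalities. Write $\psi = u/v$, so $u = \psi v$. Since $v > 0$ and $v' < 0$ (or $v' > 0$) on the annulus, and $u$ and $u'$ satisfy analogous sign conditions, all the relevant quantities are well-defined. The first step is to express the differential operator $L$ applied to $u = \psi v$ via the Leibniz rule: $L(\psi v) = (p-1)(\psi v)'' + \frac{N-1}{r}(\psi v)' = \psi L(v) + (p-1)\big(2\psi' v' + \psi'' v\big) + \frac{N-1}{r}\psi' v$. At a point $\rho$ where $\psi'(\rho) = 0$, this collapses to $L(u)(\rho) = \psi(\rho) L(v)(\rho) + (p-1)\psi''(\rho) v(\rho)$, and simultaneously $u'(\rho) = \psi(\rho) v'(\rho)$, so $|u'(\rho)|^{p-2} = \psi(\rho)^{p-2}|v'(\rho)|^{p-2}$.

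The second step is to feed in the sub/supersolution inequalities. Multiplying through by $|u'|^{p-2}$, the subsolution condition for $u$ reads $-|u'|^{p-2}L(u) \le V u^{p-1}$ on $\Omega$ (with strict inequality if $u$ is a strict subsolution), and the supersolution condition for $v$ reads $-|v'|^{p-2}L(v) \ge V v^{p-1}$ (strict if $v$ is a strict supersolution). Evaluate both at $\rho$. Using $u = \psi v$, $u^{p-1} = \psi^{p-1} v^{p-1}$, and the expressions from step one, one obtains at $\rho$:
$$
-\psi^{p-2}|v'|^{p-2}\big(\psi L(v) + (p-1)\psi'' v\big) \le V \psi^{p-1} v^{p-1},
$$
while from the supersolution inequality, multiplied by $\psi^{p-1} \ge 0$,
$$
-\psi^{p-1}|v'|^{p-2}L(v) \ge V \psi^{p-1} v^{p-1}.
$$
Subtracting, the terms $\psi^{p-1}|v'|^{p-2}L(v)$ cancel and the potential terms cancel, leaving
$$
-(p-1)\psi^{p-2}|v'|^{p-2}\psi'' v \le 0,
$$
i.e. $(p-1)\psi^{p-2}|v'|^{p-2} v \,\psi''(\rho) \ge 0$. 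Since $p > 1$, $\psi(\rho) > 0$, $v'(\rho) \ne 0$ and $v(\rho) > 0$, the prefactor is strictly positive, so $\psi''(\rho) \ge 0$; and if either of the two inequalities above was strict, the combined inequality is strict, giving $\psi''(\rho) > 0$. The final sentence — that $u/v$ has no interior local maximum — then follows because a local maximum of a $C^2$ function at an interior point $\rho$ forces $\psi'(\rho) = 0$ and $\psi''(\rho) \le 0$, contradicting $\psi''(\rho) > 0$.

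I expect the only real subtlety to be bookkeeping of signs when $u, v$ are decreasing (so $u', v' < 0$ and one works with $|u'|^{p-2} = (-u')^{p-2}$); the argument is symmetric under $u \mapsto -u$ applied to the radial profiles, so one can treat the increasing case verbatim or remark that it is identical. A minor point to state carefully is that the cancellation of the $L(v)$ terms requires multiplying the supersolution inequality by exactly $\psi^{p-1}$ (not $\psi^{p-2}$), which is legitimate since $\psi(\rho) \ge 0$; and that strictness propagates because at least one of the two constituent inequalities is strict everywhere on $\Omega$, in particular at $\rho$. No convexity input (Lemma~\ref{simpleineq}) is needed here — this lemma is genuinely just the Leibniz rule plus the defining inequalities — which is worth noting since it contrasts with the proof of Theorem~\ref{superposition}.
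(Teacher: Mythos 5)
Your proposal is correct and follows essentially the same route as the paper: expand $L(u)=L\big((u/v)\,v\big)$ by the Leibniz rule, use that at a critical point of $u/v$ one has $u'/u=v'/v$ (so $|u'|^{p-2}=(u/v)^{p-2}|v'|^{p-2}$), and combine the sub- and supersolution inequalities so the $L(v)$ and potential terms cancel, forcing $(u/v)''(\rho)>0$. The only cosmetic difference is that you evaluate everything directly at $\rho$, whereas the paper first derives the differential inequality on the whole annulus and then specializes; the conclusion and the strictness bookkeeping are identical.
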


\proof
We consider the function $w$ defined by  
$$w=\frac{u}{v}$$
and assume  that $v$ is a strict supersolution, so that
$$
\begin{array}{l}
	-Lu- V \big(\frac{u}{|u'|}\big)^{p-2}u\le 0\smallskip\\
	-Lv- V \big(\frac{v}{|v'|}\big)^{p-2}v> 0.
\end{array}
$$
Similarly to \cite[p.8]{PW} we compute, 
\begin{multline}
	Lu=L[w v]=(p-1)(wv)''+\tfrac{N-1}{r}(wv)'\\
	=(p-1)vw''+2(p-1)v'w'+(p-1)wv''+\tfrac{N-1}{r}(vw'+v'w)\\
	=(p-1)vw''+\big(2(p-1)v'+\tfrac{N-1}{r}v\big)w'+wLv.
\end{multline}
Then
\begin{multline}
	-Lu- V \frac{u^{p-2}}{|u '|^{p-2}}u=-L[w v]-V \frac{u^{p-2}}{|u '|^{p-2}}(wv)\\
	=-(p-1)vw''-\big(2(p-1)v'+\tfrac{N-1}{r}v\big)w'-w\big(Lv+\tilde{V}v)\le 0,
\end{multline}
where
$$\tilde{V}:=V\Big(\frac{u}{|u'|}\Big)^{p-2}.$$
Dividing by $v>0$, we obtain
\begin{equation}\label{PW1}
	-(p-1)w''-\Big(2(p-1)\frac{v'}{v}+\frac{N-1}{r}\Big)w'-\frac{Lv+\tilde{V}v}{v}w\le 0.
\end{equation}
Assume $w'(\rho)=0$ at $\rho\in ]r_0, R_0[$. Then 
$$\frac{u'(\rho)}{u(\rho)}=\frac{v'(\rho)}{v(\rho)},$$
and
$$\tilde{V}(\rho):=V(\rho)\Big(\frac{u(\rho)}{|u'(\rho)|}\Big)^{p-2}=V(\rho)\Big(\frac{v(\rho)}{|v'(\rho)|}\Big)^{p-2}.$$
Hence
$$Lv(\rho)+\tilde{V}(\rho)v(\rho)=Lv(\rho)+V(\rho)\Big(\frac{v(\rho)}{|v'(\rho)|}\Big)^{p-2}v(\rho)<0.$$
Then from \eqref{PW1} we conclude that
\begin{equation}
	-(p-1)w''(\rho)\le\frac{Lv(\rho)+\tilde V(\rho)v(\rho)}{v(\rho)}w(\rho)<0,
\end{equation}
and hence $w''(\rho)>0$.

The case when $u$ is strict subsolution can be treated exactly  in the same way. 
\qed  

\begin{corol}\label{cornomax}
 Let $p\geq 2$, $\Omega =B]r_0, R_0[$ be an annulus  with  $0<r_0<R_0\le \infty $ and let  either $V>0$ or $V=0$. Assume that  $u$ is a positive radial strictly monotone $C^2$--subsolution and $v$ is a positive radial strictly monotone $C^2$--supersolution of \eqref{e1}.  
 Then at least one of the following cases occurs 
\begin{itemize} 
\item[(i)] either $(u/v)'(r)\le 0$ for all $r\in [r_0, R_0[$
\item[(ii)] or there exists $\rho^*\in [r_0,R_0[$ such that   $(u/v)'(r)\geq 0$ for all $r\in [\rho^*, R_0[$.
\end{itemize}
\end{corol}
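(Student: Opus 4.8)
The plan is to exploit Lemma~\ref{nomax} together with the hypothesis that $V$ is either identically zero or strictly positive, which in both cases forces the strictness assumption of Lemma~\ref{nomax} to be satisfiable. First I would observe that if $V=0$ then equation \eqref{e1} reduces to $-\Delta_p u=0$, and for radial strictly monotone functions this means $L(u)=0$ (resp.\ $\le 0$, $\ge 0$); in particular a positive radial strictly monotone subsolution $u$ with $L(u)\le 0$ that is \emph{not} a strict subsolution must satisfy $L(u)\equiv 0$ on some interval, hence $u$ is $p$-harmonic there and one checks directly $(u/v)$-behaviour, or more simply: when $V=0$ a constant is a supersolution and the result can be read off by comparison, so I would treat $V=0$ as an easy special case and concentrate on $V>0$.

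For the main case $V>0$: since $u$ is a positive radial strictly monotone $C^2$-subsolution, $-\Delta_p u-V|u|^{p-2}u\le 0$ pointwise; because $V>0$ everywhere, either $u$ is already a strict subsolution (so Lemma~\ref{nomax} applies) or at some point the inequality is an equality. The cleaner route is instead to argue that Lemma~\ref{nomax} applies \emph{verbatim} because at least one of ``$u$ strict subsolution'' or ``$v$ strict supersolution'' can be assumed without loss of generality — but if neither holds we cannot invoke it. So the key step is: show that the conclusion of Lemma~\ref{nomax} ($w:=u/v$ has no interior local maximum) still holds when $V>0$ even if both $u,v$ fail to be strict. Here I would revisit inequality \eqref{PW1}: the only place strictness was used is in the line $Lv(\rho)+\tilde V(\rho)v(\rho)<0$. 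When $V>0$, at a critical point $\rho$ of $w$ we have $\tilde V(\rho)=V(\rho)(v(\rho)/|v'(\rho)|)^{p-2}>0$, and $Lv(\rho)+V(\rho)(v(\rho)/|v'(\rho)|)^{p-2}v(\rho)\le 0$ only because $v$ is a supersolution; strict positivity of the added term $\tilde V(\rho)v(\rho)>0$ combined with $u$ being a subsolution at $\rho$ (where $u'(\rho)/u(\rho)=v'(\rho)/v(\rho)$, so $-Lu(\rho)-\tilde V(\rho)u(\rho)\le 0$ gives $-Lv(\rho)-\tilde V(\rho)v(\rho)\le 0$ after dividing by $w(\rho)$) shows $Lv(\rho)+\tilde V(\rho)v(\rho)\ge 0$ as well, hence $=0$; but then feeding this back one gets $w''(\rho)\ge 0$ only, not strict. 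To get strictness I would differentiate once more or use that equality $Lv+\tilde Vv\equiv 0$ on an interval forces $v$ to solve a linear ODE making $w$ constant there, contradicting strict monotonicity of $u$ relative to $v$; this is the technical heart.

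Granting that $w=u/v$ has no interior local maximum on $]r_0,R_0[$, the dichotomy follows by a soft calculus argument. Let $g(r):=w'(r)$. A $C^1$ function on $[r_0,R_0[$ whose only critical points are strict local minima (or, if the boundary strictness is unavailable, whose critical points are never local maxima) has at most one sign change of $g$, and the sign change goes from $-$ to $+$: indeed if $g(a)>0$ and $g(b)<0$ for some $a<b$ then $w$ would attain an interior maximum on $[a,b]$, contradiction; so once $g$ becomes nonnegative it stays nonnegative. Thus either $g\le 0$ on all of $[r_0,R_0[$, which is case (i), or there is a first point $\rho^*\in[r_0,R_0[$ beyond which $g\ge 0$, which is case (ii). I would spell this out by setting $\rho^*:=\inf\{r\in[r_0,R_0[\,:\,w'(r)\ge 0\}$ (with $\rho^*=R_0$ interpreted as case (i) vacuously, though then (i) holds) and checking $w'\ge 0$ on $[\rho^*,R_0[$ using the no-interior-maximum property. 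The main obstacle, as indicated, is upgrading Lemma~\ref{nomax} from its stated strict hypotheses to the $V>0$ (non-strict) setting; everything after that is routine one-variable calculus.
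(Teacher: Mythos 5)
Your plan hinges on upgrading Lemma~\ref{nomax} to non-strict data when $V>0$, and that is precisely where it breaks. The strengthened claim ``$w=u/v$ has no interior local maximum'' is simply false without some strictness: take $u=v$ equal to one positive radial strictly monotone solution (such solutions exist for the potentials the corollary is meant for, e.g.\ $r^{\alpha_\lambda}$ for the Hardy potential); then $w\equiv 1$ and every point is a (non-strict) local maximum, while the corollary of course still holds trivially. Your proposed rescue --- ``equality on an interval forces $w$ constant, contradicting strict monotonicity of $u$ relative to $v$'' --- invokes a hypothesis the corollary does not contain: only $u$ and $v$ separately are strictly monotone, and nothing forbids $u/v$ from being constant. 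Even if one only aims at excluding \emph{strict} local maxima, the pointwise information $w''(\rho)\ge 0$ at critical points is not enough (a degenerate critical point of $-x^4$ type, or a plateau, is not ruled out), and the ``differentiate once more / ODE uniqueness'' step is announced but not carried out. There is also a computational slip: at a critical point $\rho$ one has $Lu(\rho)=(p-1)v(\rho)w''(\rho)+w(\rho)Lv(\rho)$, so the inference that $-Lu-\tilde Vu\le 0$ yields $-Lv-\tilde Vv\le 0$ ``after dividing by $w(\rho)$'' discards the $(p-1)v(\rho)w''(\rho)$ term and is only legitimate if one already knows $w''(\rho)\le 0$. Finally, your treatment of $V=0$ (``constants are supersolutions, read off by comparison'') does not prove the sign dichotomy for $(u/v)'$.

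The paper sidesteps all of this by perturbing the subsolution instead of the lemma. For $V>0$ it sets $u_\epsilon=u+\epsilon$, which has the same derivative as $u$ and satisfies $-\Delta_p u_\epsilon=-\Delta_p u\le Vu^{p-1}<Vu_\epsilon^{p-1}$, hence is a \emph{strict} subsolution to which Lemma~\ref{nomax} applies verbatim. If $u/v$ is somewhere increasing, i.e.\ $(u/v)(\hat r_0)<(u/v)(\rho^*)$ for some $\hat r_0<\rho^*$, then for each $R>\rho^*$ the maximum of $u_\epsilon/v$ on $[\hat r_0,R]$ must sit at an endpoint, and for small $\epsilon$ it cannot sit at $\hat r_0$; hence $(u_\epsilon/v)'(R)\ge 0$, and letting $\epsilon\to 0$ gives $(u/v)'(R)\ge 0$ for all $R\in[\rho^*,R_0[$, which is case (ii); otherwise case (i) holds. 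For $V=0$ the constant perturbation no longer produces strictness, and the paper uses $\tilde u_\epsilon=u+\epsilon r^\alpha$ with $\alpha$ chosen (according to the monotonicity of $u$) so that $L(r^\alpha)>0$ and $|\tilde u_\epsilon'|^{p-2}>|u'|^{p-2}$ for $p\ge 2$, making $\tilde u_\epsilon$ a strict subsolution. Your closing ``soft calculus'' dichotomy argument is fine in spirit, but it rests on the unproved (and, as stated, false) non-strict maximum principle; some perturbation device of the above kind is the missing ingredient.
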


\begin{proof}  We begin with the case $V>0$. Assume that there exists $\hat r_0, \rho^*\in ]r_0, R_0[$  with $\hat r_0<\rho^*$ such  that $(u/v)(\hat r_0)< (u/v) (\rho^*)$. 
 We set $u_{\epsilon}(r)=u(r)+\epsilon$ for all $\epsilon >0$ and we observe that since $V>0$ 
$$ 
-\Delta_pu_{\epsilon}=-\Delta_pu\le Vu^{p-1}< Vu_{\epsilon}^{p-1}
$$
hence $u_{\epsilon}$ is a strict subsolution.
Thus, by Lemma~\ref{nomax} the function $u_{\epsilon}/v$ cannot have a local maximum in $]r_0,R_0[$. Let $R\in ]\rho^*, R_0[$ be fixed. Consider the restriction of 
$u_{\epsilon}/v$ to $[\hat r_0, R]$ and call it $w_{\epsilon , R}$. Then, the global maximum of $w_{\epsilon , R}$ is attained either at $\hat r_0$ or at $R$. Assume that for some sequence of positive numbers $\epsilon_n$ with $\epsilon_n\to 0$ we have that $w_{\epsilon_n , R}$ attains the maximum at $\hat r_0$. Then
$w_{\epsilon_n , R}(\hat r_0)\geq w_{\epsilon_n , R}(\rho^*)$ and passing to the limit as $n\to \infty$, we get $(u/v)(\hat r_0)\geq  (u/v)(\rho^*)$ which is a contradiction. 
We conclude that for any $\epsilon$ sufficiently small  $w_{\epsilon , R}$ attains the maximum at $R$. This implies that the 
$w'_{\epsilon_n , R}(R)\geq 0.
$
Passing to the limit in the previous inequality as $\epsilon \to 0$ we get 
$
(u/v)'(R)\geq 0
$ 
for all $R\in  [\rho^*, R_0[$, and case $(ii)$ occurs. 

In order to conclude it suffices to observe that if a couple $(\hat r_0, \rho^*)$ as above does not exists then the function $u/v$ is monotone non-increasing, hence
case $(i)$ occurs. 

In the case $V=0$, the proof can be carried out in the same way provided that the function $u_{\epsilon}(r)=u(r)+\epsilon$ is replaced by  $\tilde u_{\epsilon}(r)=u(r)+\epsilon r^{ \alpha }  $ with $\alpha <\min \{ 0, (p-N)/(p-1)   \}$ if $u$ is monotone decreasing and $\alpha >\max \{ 0, (p-N)/(p-1)   \}$ if $u$ is monotone increasing. Note that $L(r^{\alpha})>0$ for all $r>0$. Moreover from the choice of $\alpha$ and the monotonicity of $u$ it follows that   $|\tilde u_{\epsilon}'|^{p-2}>|u'|^{p-2}$ if $p>2$. 
We conclude that if $p\geq 2$ then 
$
-\Delta_p\tilde u_{\epsilon}<-\Delta_pu\le 0
$
hence $\tilde u_{\epsilon}$ is a strict subsolution as required in this proof.   \end{proof}

We are ready to prove the Phragm\'{e}n--Lindel\"of Alternative for decreasing functions.

\begin{thm}[Phragm\'{e}n--Lindel\"of Alternative for decreasing functions]\label{c-super} 
 Let  $p\geq 2$, $\Omega=B]r_0, R_0[$ be an annulus with  $0<r_0<R_0\le\infty$, and let either  $V>0$ or $V=0$.
Assume that condition $(*)$ holds and that  there exists a positive radial strictly monotone decreasing $C^2$--supersolution $v$ of \eqref{e1}.  

Assume that $u$ is a positive radial strictly monotone decreasing $C^2$--subsolution of \eqref{e1}. 	
Then  either of the following alternatives occurs: 
\begin{itemize}
	\item[(i)]  
	for any positive (not necessarily radial or decreasing)  $C^2$-supersolution $w$ of \eqref{e1}
	\begin{equation}\label{c-super1}u\lesssim w\quad\text{in $B[r_0, R_0[$}\end{equation}
	\item[(ii)] 
	for any positive radial strictly monotone decreasing $C^2$--supersolution $v$ of \eqref{e1}, 
	$u$ is not  smaller than $v$, i.e.
		\begin{equation}\label{limitesup}\limsup_{r\to R_0}\frac{u(r)}{v(r)}>0.
		\end{equation}
\end{itemize} 
\end{thm}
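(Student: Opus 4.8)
The plan is to prove the dichotomy by showing that the failure of alternative (ii) forces alternative (i). So I would begin by assuming (ii) does not hold. By definition this means there exists a positive radial strictly monotone decreasing $C^2$-supersolution $v$ of \eqref{e1} with
$$
\lim_{r\to R_0}\frac{u(r)}{v(r)}=0,
$$
i.e. $u$ is smaller than $v$. The aim is then to upgrade "smaller" to "monotonically smaller" so that Lemma~\ref{super} applies and immediately delivers \eqref{c-super1}.

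\textbf{The upgrade step.} This is exactly where Corollary~\ref{cornomax} enters, and it is the step I expect to carry the weight of the argument. Since $p\ge 2$, $V>0$ or $V=0$, and both $u$ and $v$ are positive radial strictly monotone (decreasing) $C^2$ sub/supersolutions, Corollary~\ref{cornomax} applies to the quotient $u/v$ and yields exactly two possibilities: either $(u/v)'\le 0$ on all of $[r_0,R_0[$, or there is $\rho^*\in[r_0,R_0[$ with $(u/v)'\ge 0$ on $[\rho^*,R_0[$. The second possibility is incompatible with $u$ being smaller than $v$: if $u/v$ were non-decreasing on $[\rho^*,R_0[$, then $u(r)/v(r)\ge u(\rho^*)/v(\rho^*)>0$ for every $r\in[\rho^*,R_0[$, contradicting $\lim_{r\to R_0}u(r)/v(r)=0$. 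Hence the first possibility must hold: $u/v$ is monotone non-increasing on the whole interval $[r_0,R_0[$, so $u$ is monotonically smaller than $v$ in the sense of Definition~\ref{def-smaller}(iii) (one may take $r_1=r_0$).

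\textbf{Conclusion.} With this in hand, Lemma~\ref{super} applies verbatim: for every positive (not necessarily radial or decreasing) $C^2$-supersolution $w$ of \eqref{e1} one has $u\lesssim w$ in $B[r_0,R_0[$, which is precisely \eqref{c-super1}. Thus $\lnot(\mathrm{ii})\Rightarrow(\mathrm{i})$, and the alternative is established. The only genuinely non-formal ingredient is the passage from "$u$ smaller than $v$" to "$u$ monotonically smaller than $v$"; this is the purpose of the maximum principle for the quotient (Lemma~\ref{nomax}) and its Corollary~\ref{cornomax}, the elementary monotonicity-versus-decay observation above being what discards the unwanted branch of the corollary.
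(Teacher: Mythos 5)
Your proof is correct and uses exactly the paper's ingredients — Lemma~\ref{super}, Corollary~\ref{cornomax}, and the observation that a non-decreasing positive quotient cannot tend to zero — differing only in that you prove $\lnot(\mathrm{ii})\Rightarrow(\mathrm{i})$ whereas the paper argues $\lnot(\mathrm{i})\Rightarrow(\mathrm{ii})$, a purely logical reorganization of the same argument. No gaps.
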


{\bf Proof.} Assume that alternative (i) does not occur. Then, by Lemma~\ref{super} it follows that 
for any positive radial strictly monotone decreasing $C^2$--supersolution $v$ of \eqref{e1} in $B[r_0, R_0[$, 
	$u$ is not monotonically smaller than $v$, i.e.
	\begin{itemize}
		\item[(1)] either \eqref{limitesup} holds
		
		\item[(2)] or $u$ is smaller than $v$ and $u/v$ is not monotone non-increasing in any neighborhood of $R_0$.
	\end{itemize} 
In oder to conclude the proof it suffices to prove that $(2)$ doesn't occur. Note that by Corollary~\ref{cornomax} if $u/v$ is  not monotone non-increasing in any neighborhood of $R_0$ then  $u/v$ must be  monotone non-decreasing in a neighborhood of $R_0$ but this is not compatible with the fact that $u$ is smaller than $v$. Thus case $(2)$ has to be excluded and the proof is complete. 
 \hfill $\Box$\\

%

\begin{rem}  We can prove Theorem~\ref{c-super} under the more general assumption  $V\geq 0$ provided that $u$ is required, in addition,  to be a strict subsolution. In fact, if $u$ is a strict subsolution then Lemma~\ref{nomax} is directly applicable without using Corollary~\ref{cornomax}. Namely, by  Lemma~\ref{nomax} and the fact that $u$ is a strict subsolution we deduce  that $u/v$ cannot have local maxima, hence in the argument used in the proof of Theorem~\ref{c-super}  the case (2) (where we say that  $u$ is smaller than $v$ and $u/v$ is not monotone non-increasing in any neighborhood of $R_0$) has to be excluded. 
\end{rem}

\begin{rem}\label{genouter}  As it is stated in Theorem~\ref{c-super}, the functions $w$ admissible in statement (i), formula \eqref{c-super1} are not necessarily radial  symmetric. Actually, by taking for simplicity $R_0=\infty$ and  inspecting the proof of Theorem~\ref{c-super} one can easily see that the alternative formulated in statement (i) of Theorem~\ref{c-super} concerns all  supersolutions $w$ defined in any outer  domain $\widetilde \Omega$ contained in  $\Omega$, with sufficiently regular boundary, and such that condition $(*)$ holds in $\widetilde\Omega$. To prove this, it is enough to apply the Comparison Principle to $u$ and $w$ in domains of the form $\tilde \Omega \cap B(0, R]$ with $R$ large enough. Moreover, the regularity assumptions required on $w$ (interior and at the boundary $\partial\widetilde\Omega$) are the minimal assumptions that guarantee the application of the Comparison Principle, and in this sense it is not required that $w$ is a classical supersolution. 
The same observation applies to Theorem~\ref{c-super-2} statement (ii). 
\end{rem}

 We plan now to prove the Phragm\'{e}n--Lindel\"of Alternative for increasing  functions. To do so, we need the following

\begin{lem}\label{principle-2}   
 Let $p\geq 2$, $\Omega =B]r_0, R_0[$ be an annulus with  $0<r_0<R_0\le\infty$ and $V\geq 0$.  Assume that condition $(*)$ holds and that  $u$ is a radial strictly monotone increasing $C^2$--subsolution of \eqref{e1}  with $u(r_0)\ge 0$. Assume that either $u(r_0)=0$ or there exists a positive radial strictly monotone increasing  $C^2$--supersolution $v$ of \eqref{e1}  such that  
\begin{equation}\label{principle0}
u(r_0)=v(r_0),\ \ {\rm and}\ \ \Big(\frac{u}{v}\Big)^\prime>0
\ \ {\rm for\ all}\ r\in ]r_0, R_0[.
\end{equation}

Then for any positive (not necessarily radial or increasing) $C^2$-supersolution $w$ of \eqref{e1}  we have 
		\begin{equation}\label{principle1}\limsup_{|x|\to R_0}\frac{u(x)}{w(x)}>0.
		\end{equation}
\end{lem}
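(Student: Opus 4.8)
The plan is to argue by contradiction: suppose that $u$ is smaller than $w$ in the sense that $\lim_{|x|\to R_0} u(x)/w(x)=0$, and derive a contradiction with the fact that $u$ is a strictly increasing positive (eventually, once we pass $r_0$) subsolution that is bounded below by a comparison argument. The key mechanism is the same superposition trick used in Lemma~\ref{super}, but now applied to increasing functions: we want to show $w\lesssim u$ near $R_0$ forces a lower bound on $w$ that is incompatible with $u$ being smaller than $w$. So first I would dispose of the two cases of the hypothesis separately but in a unified manner.

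\emph{Case $u(r_0)=0$.} Here $u$ is strictly increasing with $u(r_0)=0$, so $u>0$ on $]r_0,R_0[$. Fix any positive $C^2$-supersolution $w$ on $\Omega$. Assume for contradiction that $u(x)/w(x)\to 0$ as $|x|\to R_0$. For $R\in]r_0,R_0[$ large, consider the annulus $B]r_0,R[$ (or rather a slight enlargement so that regularity at $S_{r_0}$ is available) and compare $u$ with $\delta w$ there for small $\delta>0$: since $w$ is a positive supersolution, Theorem~\ref{thmCP} applies with $v\rightsquigarrow \delta w$ (note $\delta w$ is still a nonnegative supersolution because $V\ge 0$ — actually one must check homogeneity: $-\Delta_p(\delta w)-V(\delta w)^{p-1}=\delta^{p-1}(-\Delta_p w-Vw^{p-1})\ge 0$, so yes). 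On $S_{r_0}$ we have $u(r_0)=0\le \delta w$, and on $S_R$ we use that $u/w\to 0$ to get $u\le \delta w$ there once $R$ is close enough to $R_0$. Hence $u\le\delta w$ throughout $B[r_0,R[$, and letting $R\to R_0$, then $\delta\to 0$ — wait, $\delta$ is fixed first and $R$ depends on $\delta$, so after letting $R\to R_0$ we get $u\le\delta w$ on all of $\Omega$ for every $\delta>0$, hence $u\equiv 0$, contradicting that $u$ is strictly increasing. This yields \eqref{principle1}.

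\emph{Case $u(r_0)=v(r_0)>0$ with $(u/v)'>0$ on $]r_0,R_0[$.} Now I use Theorem~\ref{superposition} in the increasing regime. From $(u/v)'>0$ we get $u'v-uv'>0$, i.e. $u'/v'>u/v\ge 1$ on $]r_0,R_0[$ (using $v'>0$ and $u\ge v$, which follows from $u(r_0)=v(r_0)$ and $(u/v)$ increasing). Hence $u'\ge v'>0$ and $u\ge v>0$, so Theorem~\ref{superposition} gives that $u-v$ is a subsolution on $\Omega$, and $u-v\ge 0$ with $(u-v)(r_0)=0$. Thus $u-v$ satisfies exactly the hypotheses of the first case (a strictly monotone — here non-decreasing, but actually strictly increasing since $u'>v'$ wherever $(u/v)'>0$ strictly — $C^2$-subsolution vanishing at $r_0$); strictly speaking one should check $u-v$ is strictly increasing, which holds because $(u-v)'=u'-v'$ and $u'/v'>1$ gives $u'>v'$, so $(u-v)'>0$. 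Applying the Case $u(r_0)=0$ conclusion to $u-v$, for any positive $C^2$-supersolution $w$ we get $\limsup_{|x|\to R_0}(u(x)-v(x))/w(x)>0$, and since $v>0$ this forces $\limsup u(x)/w(x)>0$ as well, which is \eqref{principle1}.

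\emph{Main obstacle.} The delicate point is the regularity needed to invoke the Comparison Principle (Theorem~\ref{thmCP}) on the truncated annulus $B]r_0,R[$: Theorem~\ref{thmCP} requires the boundary traces to lie in $C^{1,\alpha}(\partial\Omega)$, and $u$, $v$, $w$ are only assumed $C^2$ up to $S_{r_0}$, not up to $S_R$ — but $S_R$ is an interior sphere of the original domain where everything is smooth, so this is fine; the real care is in the passage $R\to R_0$ and the fact that $C$ (or $\delta$) must be chosen independently of $R$, exactly as in the proof of Lemma~\ref{super}. One must also be slightly careful that in the case $u(r_0)=0$ the function $u$ might only be assumed $C^2$ on $B[r_0,R_0[$, so $u'$ need not vanish at $r_0$ — but that is not needed, since $u'(r_0)>0$ is part of strict monotonicity and the superposition theorem is applied only on subannuli bounded away from $R_0$ where $u'>0$ everywhere, so the coefficient $|u'-v'|^{p-2}$ causes no trouble. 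Finally, in reducing Case 2 to Case 1 one should double-check that $u-v$ is genuinely $C^2$ up to $S_{r_0}$ (clear, as a difference of two such functions) and that no sign degeneracy of $(u-v)'$ at isolated points spoils the application of Theorem~\ref{superposition}, whose hypothesis $u'\ge v'>0$ we have verified pointwise.
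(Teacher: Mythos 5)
Your argument is correct and follows essentially the paper's own proof: you reduce the case $u(r_0)>0$ to the case of a subsolution vanishing at $S_{r_0}$ via Theorem~\ref{superposition} applied to $u-v$, and then obtain the conclusion by a contradiction argument with the Comparison Principle on truncated annuli $B]r_0,R[$ and a limit $R\to R_0$. The only cosmetic difference is that you compare $u$ (resp.\ $u-v$) with $\delta w$ using the $(p-1)$-homogeneity of the operator, whereas the paper compares $n(u-v)$ with $w$ and lets $n\to\infty$; these are equivalent.
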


{\bf Proof.}  Assume that $u(r_0)>0$. Let $v$ be as in the statement.
Note that 	$u'(r)>v'(r)$ and $u(r)>v(r)$ for all $r\in]r_0,R_0[$, in view of the monotonicity of $\frac{u}{v}$.

Thus we can apply Theorem~\ref{superposition} and deduce that the function
$u-v$ is a subsolution of \eqref{e1} in $B[r_0, R_0[$. 

Let $w$ be a supersolution as in the statement.  We claim that 
\begin{equation}\label{principle2}\limsup_{|x|\to R_0}\frac{u(x)-v(x)}{w(x)}>0.
		\end{equation}
Note that \eqref{principle2} implies 	 \eqref{principle1}. In order to prove our claim, we argue by contradiction and assume that \eqref{principle2} doesn't hold, that is 
\begin{equation}\label{principle3}\limsup_{|x|\to R_0}\frac{u(x)-v(x)}{w(x)}=0.
		\end{equation}
		It follows that 
\begin{equation}\label{principle4}\liminf_{|x|\to R_0}\frac{w(x)}{u(x)-v(x)}=\infty.
		\end{equation}
	By \eqref{principle4} it follows that for any $n\in \mathbb{N}$ there exists $R_n\in ]r_0, R_0[$ such that 
	\begin{equation}
	\label{wnuv}
	w(x)\geq n  (u(x)-v(x))
	\end{equation}
	for all  $x\in B[R_n, R_0[$.  Now, since $w$ is a supersolution, $n  (u-v)$ is a subsolution and $w\geq n  (u-v) $ at the extremes of  the interval $[r_0, R_n]$, by the  Comparison Principle \cite[Theorem~5]{Garcia-Melian}  (restated in  Theorem~\ref{thmCP})  we deduce that \eqref{wnuv} holds also for all $x\in B[r_0, R_n[$, hence \eqref{wnuv} 
	holds for all $x\in B[r_0, R_0[$.  Letting $n\to \infty $ in \eqref{wnuv}  we get a contradiction and the claim is proved. 

To complete the proof, it remains to consider the simpler case $u(r_0)=0$ for which it suffices to repeat directly the same argument with $u$ replacing $u-v$. 
\hfill $\Box$

\begin{rem}\label{largeVnegative} Assume that $V\le 0$ and that $u$ is a positive subsolution of \eqref{e1}  non necessarily radial. Assume that $u(x)\geq c_0$ for all $|x|\geq r_0$ and  $u(x)= c_0$ for all $|x|= r_0$ where $c_0$ is a non-negative constant. Then \eqref{principle1} holds for all positive supersolutions $w$, not necessarily radial.  This can be proved as in the proof of Lemma~\ref{principle-2}: take $v=c_0$, observe that since $V\le 0$ then $u-v$ is a subsolution, then follow the contradiction argument from \eqref{principle2} to \eqref{wnuv}.
\end{rem}

 We now prove  the Phragm\'{e}n--Lindel\"of Alternative for increasing  functions.

\begin{thm}[Phragm\'{e}n--Lindel\"of Alternative for increasing functions]\label{c-super-2} 
	 Let $p\geq 2$,  $\Omega=B]r_0, R_0[$ be an annulus with  $0<r_0<R_0\le\infty$ and $V\geq 0$.
	Assume\footnote{This assumption implies the validity of condition $(*)$} that there exists a positive radial strictly monotone increasing  $C^2$ strict supersolution $v$ of \eqref{e1}.
	
	Assume that $u$ is a radial monotone increasing $C^2$--subsolution of \eqref{e1}  such that $u(r_0)\ge 0$. 	
	Then  either of the following alternatives occurs: 
	\begin{itemize}	
		\item[(i)] 
		for any positive radial strictly monotone increasing $C^2$ strict supersolution $v$ of \eqref{e1}  we have
	\begin{equation}\label{limitesupbis}
		u(r)\lesssim v(r)\quad\text{for all $r\in[r_0,R_0)$}
	\end{equation}
		\item[(ii)]  
		for any positive (not necessarily radial or increasing) $C^2$-supersolution
		$w$ of \eqref{e1},  $u$ is not smaller than $w$, i.e.
		\begin{equation}\label{principle1bis}
			\limsup_{|x|\to R_0}\frac{u(x)}{w(x)}>0.
		\end{equation}
	\end{itemize} 
\end{thm}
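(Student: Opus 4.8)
The plan is to prove the dichotomy in the contrapositive form ``$\text{not (i)} \Rightarrow \text{(ii)}$'': assuming alternative (i) fails, I deduce alternative (ii), the engine being Lemma~\ref{principle-2}, which produces (ii) the moment its hypotheses become available. I assume throughout that $u$ is strictly monotone increasing; the general monotone case needs only routine modifications, essentially because every critical point of the ratios considered below automatically sits at a point where $u'\neq 0$. If $u(r_0)=0$, the first branch of Lemma~\ref{principle-2} applies directly and yields \eqref{principle1} for every positive $C^2$-supersolution $w$, so (ii) holds and there is nothing more to do. Thus I may assume $u(r_0)>0$ and that (i) fails, i.e.\ I fix a positive radial strictly monotone increasing $C^2$ strict supersolution $v^*$ of \eqref{e1} that does not dominate $u$. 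Since $u$ and $v^*$ are positive and continuous on the half-open interval $[r_0,R_0[$, the ratio $u/v^*$ is bounded on each compact subinterval, so the failure of domination forces $\limsup_{r\to R_0}u(r)/v^*(r)=+\infty$.

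Next I would use the $(p-1)$-homogeneity of \eqref{e1}: for every $c>0$ the function $cv^*$ is again a positive radial strictly monotone increasing $C^2$ strict supersolution. Rescaling, set $v_1:=\frac{u(r_0)}{v^*(r_0)}v^*$, so that $v_1(r_0)=u(r_0)$ and the ratio $w_0:=u/v_1$ is a $C^2$ function on $[r_0,R_0[$ with $w_0(r_0)=1$ and $\limsup_{r\to R_0}w_0(r)=+\infty$. Feeding $(u,v_1)$ into the computation of Lemma~\ref{nomax} --- legitimate since $v_1$ is a strict supersolution and since at any critical point $\rho\in\,]r_0,R_0[$ of $w_0$ the identity $u'(\rho)/u(\rho)=v_1'(\rho)/v_1(\rho)>0$ shows $u'(\rho)\neq0$, which is the only point where strict monotonicity of $u$ intervenes --- I conclude $w_0''(\rho)>0$ at every such $\rho$. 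Hence every interior critical point of $w_0$ is a strict local minimum, so there is at most one (a $C^1$ function with two strict local minima would have a local maximum between them). Since $w_0$ is unbounded near $R_0$ it is not eventually decreasing, and therefore there is $r_1\in\,]r_0,R_0[$ with $w_0'>0$ on $[r_1,R_0[$; in particular $u=w_0\,v_1$ is strictly monotone increasing on $[r_1,R_0[$ with $(u/v_1)'>0$ there.

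It remains to line the data up with Lemma~\ref{principle-2} on the smaller annulus $B]r_1,R_0[$. Rescaling once more, $v_2:=\frac{u(r_1)}{v_1(r_1)}v_1$ is a positive radial strictly monotone increasing $C^2$ strict supersolution on $B]r_1,R_0[$ with $v_2(r_1)=u(r_1)$ and $(u/v_2)'>0$ on $]r_1,R_0[$; its very existence also guarantees condition $(*)$ on $B]r_1,R_0[$ (the footnote to the theorem), a strict supersolution never being a solution. Thus $u|_{B]r_1,R_0[}$ and $v_2$ meet the hypotheses of the second branch of Lemma~\ref{principle-2} on the annulus $B]r_1,R_0[$, and that lemma yields $\limsup_{|x|\to R_0}u(x)/w(x)>0$ for every positive $C^2$-supersolution $w$ of \eqref{e1}. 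Since every positive $C^2$-supersolution of \eqref{e1} on $\Omega=B]r_0,R_0[$ restricts to one on $B]r_1,R_0[$ and the limit superior at $R_0$ is unaffected by the restriction, this is exactly \eqref{principle1bis}; hence (ii) holds, completing the dichotomy. The main obstacle is the middle step: extracting from Lemma~\ref{nomax} that $u/v_1$ is eventually strictly increasing while verifying that the merely monotone (not strictly monotone) hypothesis on $u$ does no harm, together with the bookkeeping of the two rescalings needed to match the normalization $u(r_1)=v_2(r_1)$ and the sign of $(u/v_2)'$ required by Lemma~\ref{principle-2}.
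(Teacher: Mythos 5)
Your proof is correct and rests on the same two ingredients as the paper's own argument: Lemma~\ref{nomax} to show that the ratio of $u$ to a strict increasing supersolution has at most one critical point (necessarily a strict minimum), and Lemma~\ref{principle-2} to convert an eventually increasing, touching ratio into the largeness statement \eqref{principle1bis}. You organize the dichotomy in the contrapositive direction $\neg(i)\Rightarrow(ii)$ (localizing to a tail annulus $B]r_1,R_0[$ where the ratio increases and rescaling there), whereas the paper proves $\neg(ii)\Rightarrow(i)$ with two applications of Lemma~\ref{principle-2}, but the mechanics are essentially identical.
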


{\bf Proof.}
If $u(r_0)=0$ then $(ii)$ holds by Lemma \ref{principle1}. Assume $u(r_0)>0$ and statement $(ii)$ does not hold.    
Let $v$ be a  positive radial strictly monotone increasing  $C^2$ strict supersolution $v$ of \eqref{e1} in $B[r_0, R_0[$. Assume that  
$u(r_0)=v(r_0)$. Then  by Lemma \ref{principle1}  the inequality
\begin{equation}\label{principle0-not}
\Big(\frac{u}{v}\Big)^\prime>0
	\ \ {\rm for\ all}\ r\in ]r_0, R_0[
\end{equation} 
fails, that is there exists at least one point $\hat \rho \in ]r_0, R_0[$ such that $(u/v)^\prime(\hat \rho )\le 0$.

Note that by Lemma \ref{nomax}, function $u/v$ has no critical points unless they are strict minima. This, combined with the fact that   $(u/v)^\prime(\hat \rho )\le 0$, implies the following two possibilities: 
\begin{itemize} 
\item[(a)] either  $u/v$ is a decreasing function 
\item[(b)] or  there exists $\rho^*\in ]r_0,R_0[$ such that $(u/v)'(r)<0   $ for all $r\in ]r_0, r^*[$ and $(u/v)'(r)>0   $ for all $r\in ] r^*, R_0[$, hence $(u/v)'(\rho^*)=0$.
\end{itemize}

If $(a)$ occurs then $u(r)/v(r)\le u(r_0)/v(r_0)=1$ and the inequality $ u(r)\le v(r)$ holds for all $r\in[r_0,R_0)$. Assume now that $(b)$ occurs. We claim that $u(r)< v(r)$ for all $r\in]r_0,R_0[$. Indeed, if by contradiction there exists $\hat r_0\in ]r_0,R_0[$ such that $u(\hat r_0)=v(\hat r_0)$ then $\hat r_0>r^*$   and we can apply  Lemma \ref{principle-2} with $r_0$ replaced by $\hat r_0$ in order to conclude that statement $(ii)$ holds, a contradiction since statement $(ii)$ was negated from the very beginning of the proof. 

If the assumption $v(r_0)=u(r_0)$ is not satisfied then it suffices to replace $v$ by the rescaled function $vu(r_0)/v(r_0)$ in the previous argument in order to conclude that 
 \eqref{limitesupbis} holds. \hfill $\Box$

\section{The case of the Hardy potential}
\label{hardysec}

In this section we apply Theorems~\ref{c-super} and \ref{c-super-2}  to equation \eqref{e1} with Hardy potentials and improved Hardy potentials. We begin with the first case. 

\subsection{Pure Hardy case.}
\label{puresubsec}
We consider the case where the potential $V$ is given by 
$$
V(x)=\frac{\lambda}{|x|^p},
$$
where $\lambda \geq 0$ and $\Omega=B[r_0,\infty)$ with $r_0>0$. Thus equation \eqref{e1} becomes
\begin{equation}\label{mainhardy}
-\Delta_p u-\frac{\lambda}{|x|^p} |u|^{p-2}u=0\quad \text{in $B[r_0,\infty)$}.
\end{equation}
It is convenient to set 
  $$
  \lambda_{\alpha}= \alpha |\alpha |^{p-2}\big(p-N  -(p-1)\alpha   \big)
  $$
  for all $\alpha \in {\mathbb{R}}$. It is a direct computation using formula \eqref{polar} to prove the following:

\begin{itemize}
\item  if $\lambda_{\alpha}\le \lambda$ then $u=r^\alpha$ is a subsolution of the equation in \eqref{mainhardy} in $\mathbb{R}^N\setminus \{0\}$.
\item  if $\lambda_{\alpha}\geq \lambda$ then $u=r^\alpha$ is a supersolution of the equation in  \eqref{mainhardy}   in $\mathbb{R}^N\setminus \{0\}$.
\end{itemize}
Now, let us consider  $\lambda_{\alpha}$ as a function of $\alpha$. It is straightforward to see that  
 the function $\lambda_{\alpha}$ is increasing for $\alpha \in ]-\infty , \frac{p-N}{p}[$ and  decreasing for  $\alpha \in ] \frac{p-N}{p}, +\infty [$ and its maximum is given by 
\begin{equation}\label{ch}
C_H:=\lambda_{\frac{p-N}{p}}= \left| \frac{p-N}{p}   \right|^p\, .
\end{equation}

Assume that $ p\ne N$, so that  $C_H>0$. If $\lambda \in [0, C_H]$ then there exist two solutions $\alpha_{\lambda}$ and $\bar\alpha_{\lambda}$ to the equation
$$
\lambda_{\alpha}=\lambda,
$$
with  the following properties: if $p<N$ then  $(p-N)/(p-1)\le \alpha_{\lambda}\le \bar\alpha_{\lambda}\le 0$,   if $p>N$ then  $0\le \alpha_{\lambda}\le \bar\alpha_{\lambda}\le (p-N)/(p-1)$.  In both cases  $\alpha_{\lambda}=\bar\alpha_{\lambda}$  if and only if $\lambda = C_H$. Moreover, the extremes $(p-N)/(p-1)$ and $0$ are reached by $\alpha_{\lambda}$ and $\bar\alpha_{\lambda}$ only for $\lambda=0$. In the case $p=N$ we have that $C_H$=0 and $\alpha_{\lambda}=\bar\alpha_{\lambda}=0$.
As an immediate  consequence of the previous observations we conclude that for any $\lambda \in [0, C_H]$:
\begin{itemize}
\item[(i)]  If $\alpha \le \alpha_{\lambda} $ or $\alpha \geq \bar\alpha_{\lambda}$ then $u=|x|^\alpha$ is a subsolution of the equation in \eqref{mainhardy} in $\mathbb{R}^N\setminus \{0\}$.
\item[(ii)]  If $   \alpha_{\lambda}   \le \alpha\le  \bar\alpha_{\lambda}   $  then  $u=|x|^\alpha$ is a supersolution of the equation in \eqref{mainhardy}   in $\mathbb{R}^N\setminus \{0\}$.
\end{itemize}
Moreover, the case of strict sub or super solutions occur precisely when $\alpha\ne \alpha_{\lambda}, \bar\alpha_{\lambda}$.

We are now ready to show that Theorem~\ref{c-super} is applicable to equation \eqref{mainhardy} with $\lambda \in [0, C_H]$  in any exterior domain $B[r_0, \infty [$.

\begin{thm} \label{tH} Let $2\le  p< N$ and $\lambda \in [0, C_H]$. 
	Then for any positive  (not necessarily radial or decreasing)  $C^2$-supersolution $w$ of \eqref{mainhardy}  we have
	\begin{equation}\label{tH1}|x|^{\alpha_\lambda}\lesssim w\quad\text{in $B[r_0, \infty [$}.\end{equation}
	Further, for any positive radial strictly monotone decreasing $C^2$-supersolution $v$ of \eqref{mainhardy}  
	if $\lambda<C_H$ we have
	\begin{equation}\label{tH2}\limsup_{|x|\to \infty}\frac{|x|^{\bar\alpha_\lambda}}{v(x)}>0,
	\end{equation}
	 or if $\lambda= C_H$ then for any $\tau>0$ we have
\begin{equation}\label{tH2-log}
	\limsup_{|x|\to \infty}\frac{|x|^{\frac{p-N}{p}}\log^{2/p}|x|\log^\tau\log |x|}{v(x)}>0.
\end{equation}
\end{thm}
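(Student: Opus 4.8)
The plan is to derive all three statements by applying the Phragm\'{e}n--Lindel\"of Alternative for decreasing functions (Theorem~\ref{c-super}) to suitable \emph{explicit} model functions: powers $|x|^\alpha$ and, at the critical coupling $\lambda=C_H$, power--logarithmic functions $|x|^{\gamma}(\log|x|)^s$ and $|x|^{\gamma}(\log|x|)^{2/p}(\log\log|x|)^\tau$, where throughout $\gamma:=\tfrac{p-N}{p}<0$ (so $C_H=|\gamma|^p$). The structural facts already recorded supply everything needed on the algebraic side: since $2\le p<N$ one has $(p-N)/(p-1)\le\alpha_\lambda\le\bar\alpha_\lambda\le 0$ with $\alpha_\lambda<0$ for every $\lambda\in[0,C_H]$ and $\bar\alpha_\lambda<0$ for every $\lambda\in(0,C_H]$ (the extreme value $0$ being attained only by $\bar\alpha_0$); moreover $\lambda_{\alpha_\lambda}=\lambda_{\bar\alpha_\lambda}=\lambda$, so $|x|^{\alpha_\lambda}$ and $|x|^{\bar\alpha_\lambda}$ are \emph{solutions} of \eqref{mainhardy}, in particular sub- and supersolutions, positive, radial, $C^2$ on every $B[r_0,\infty)$ and strictly monotone decreasing there as soon as the relevant exponent is negative; finally, for $\lambda<C_H$ every $|x|^\alpha$ with $\alpha_\lambda<\alpha<\bar\alpha_\lambda$ is a strict supersolution, so condition $(*)$ and the hypothesis of Theorem~\ref{c-super} hold on every $B[r_0,\infty)$.

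\textbf{The power cases.} For \eqref{tH1} with $0\le\lambda<C_H$ I would apply Theorem~\ref{c-super} with subsolution $u=|x|^{\alpha_\lambda}$: the desired conclusion is exactly alternative~(i), so it suffices to rule out alternative~(ii) by exhibiting one positive radial strictly monotone decreasing $C^2$ supersolution $v$ with $|x|^{\alpha_\lambda}$ smaller than $v$; take $v=|x|^{\alpha}$ with $\alpha_\lambda<\alpha<\min\{\bar\alpha_\lambda,0\}$, so that $|x|^{\alpha_\lambda}/v=|x|^{\alpha_\lambda-\alpha}\to0$. For \eqref{tH2} with $0<\lambda<C_H$ I would apply Theorem~\ref{c-super} with subsolution $u=|x|^{\bar\alpha_\lambda}$ (strictly decreasing since $\bar\alpha_\lambda<0$): here alternative~(i) is impossible, because $w=|x|^{\alpha_\lambda}$ is a supersolution and $|x|^{\bar\alpha_\lambda}/|x|^{\alpha_\lambda}=|x|^{\bar\alpha_\lambda-\alpha_\lambda}\to\infty$ as $\bar\alpha_\lambda>\alpha_\lambda$, so alternative~(ii) must hold, which is \eqref{tH2}. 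The case $\lambda=0$ of \eqref{tH2} is trivial: then $\bar\alpha_0=0$ and any positive decreasing $v$ satisfies $\limsup 1/v\ge 1/v(r_0)>0$.

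\textbf{The critical case $\lambda=C_H$.} Now $\alpha_\lambda=\bar\alpha_\lambda=\gamma$ and no power is a supersolution strictly larger than $|x|^\gamma$, so logarithmic corrections are unavoidable. In the computation below I will verify that, for $|x|$ large, $v_s:=|x|^{\gamma}(\log|x|)^{s}$ is a strictly decreasing \emph{strict supersolution} of \eqref{mainhardy} for every $0<s<2/p$, while $u_\tau:=|x|^{\gamma}(\log|x|)^{2/p}(\log\log|x|)^\tau$ is a strictly decreasing \emph{subsolution} for every $\tau>0$. Granting this, fix $r_1\ge r_0$ large enough that these properties hold on $\Omega_1:=B]r_1,\infty[$; then $(*)$ and the hypothesis of Theorem~\ref{c-super} hold on $\Omega_1$ via $v_{1/p}$. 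For \eqref{tH1}: apply Theorem~\ref{c-super} on $\Omega_1$ with subsolution $|x|^\gamma$; alternative~(ii) fails because $|x|^\gamma/v_{1/p}=(\log|x|)^{-1/p}\to0$, hence alternative~(i) gives $|x|^\gamma\lesssim w$ in $\Omega_1$ for every positive $C^2$ supersolution $w$, and a compact--annulus estimate ($w$ is bounded below by a positive constant on $\{r_0\le|x|\le r_1\}$, where also $|x|^\gamma\le r_0^\gamma$) extends this to $B[r_0,\infty)$. For \eqref{tH2-log}: fix $\tau>0$, apply Theorem~\ref{c-super} on $\Omega_1$ with subsolution $u_\tau$; alternative~(i) fails because $w=|x|^\gamma$ is a solution, hence a supersolution, and $u_\tau/|x|^\gamma=(\log|x|)^{2/p}(\log\log|x|)^\tau\to\infty$, so alternative~(ii) holds, and since any positive radial strictly decreasing $C^2$ supersolution of \eqref{mainhardy} on $B[r_0,\infty)$ restricts to one on $\Omega_1$, this yields \eqref{tH2-log}.

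\textbf{The key computation and main obstacle.} It remains to prove the two assertions about $v_s$ and $u_\tau$. Substituting $u=r^{\gamma}g(r)$ into \eqref{mainbis}--\eqref{polar} and collecting powers of $r$ gives
\[
-\Delta_p u-\frac{C_H}{r^p}u^{p-1}
=-\,r^{\gamma(p-1)-p}\,g^{p-1}\Big\{|\gamma|^{p-2}\big(1-\tfrac{\rho}{|\gamma|}\big)^{p-2}\big[{-}\gamma^2+(p-2)\gamma\rho+(p-1)(\rho^2+\rho')\big]+|\gamma|^p\Big\},
\]
valid once $\rho:=rg'/g$ is small and $\gamma g+rg'<0$, where $\rho':=\tfrac{d\rho}{d(\log r)}$; for the above $g$'s these conditions hold for $r$ large. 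Expanding $(1-\rho/|\gamma|)^{p-2}$, the $O(1)$ term and the $O(\rho)$ term in the brace cancel identically — this is precisely the criticality $\lambda=C_H$ — and the first surviving term equals $(p-1)|\gamma|^{p-2}\big(\tfrac{p}{2}\rho^2+\rho'\big)\big(1+o(1)\big)$. For $g=(\log r)^s$ one has $\tfrac{p}{2}\rho^2+\rho'=\tfrac{s(ps-2)}{2}(\log r)^{-2}$, negative for $0<s<2/p$, so $v_s$ is a strict supersolution for large $r$; for $g=(\log r)^{2/p}(\log\log r)^\tau$ the coefficient of $(\log r)^{-2}$ again vanishes and the next term is $\tau(\log r)^{-2}(\log\log r)^{-1}(1+o(1))$, positive for $\tau>0$, so $u_\tau$ is a subsolution for large $r$; strict monotonicity in each case is read off from $u'=r^{\gamma-1}(\gamma g+rg')$ with $\gamma g+rg'\sim\gamma g<0$. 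I expect the main obstacle to be exactly this asymptotic bookkeeping: since criticality forces the two leading orders of the brace to cancel, the sign of the whole expression is decided only at the next, genuinely lower, order, which requires careful control of the cross terms between the binomial expansion of $(1-\rho/|\gamma|)^{p-2}$ and the bracket (and, for $u_\tau$, of the $\log\log$ contributions); everything else reduces to routine invocations of Theorem~\ref{c-super} and of the comparison principle underlying it.
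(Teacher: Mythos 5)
Your proposal is correct and follows essentially the same route as the paper: the power-case bounds come from exhibiting the intermediate supersolutions $|x|^\beta$ with $\beta\in]\alpha_\lambda,\bar\alpha_\lambda[$ and invoking the Phragm\'en--Lindel\"of alternative (the paper uses Lemma~\ref{super} directly for \eqref{tH1}, which is the same mechanism), and the critical case uses exactly the comparison functions $|x|^{\frac{p-N}{p}}\log^{s}|x|$ and $|x|^{\frac{p-N}{p}}\log^{2/p}|x|\,(\log\log|x|)^\tau$. The only difference is that you verify the sub/supersolution properties of these logarithmic corrections by an explicit (and correct) asymptotic computation, whereas the paper simply cites Table~\ref{table} from \cite{LLM}.
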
 

\proof
First assume $\lambda<C_H$.
Fix $\beta\in ]\alpha_{\lambda}, \bar\alpha_{\lambda} [$ and consider the supersolution defined by $v(x)=|x|^{\beta}$. Note that function $v$ guarantees the validity of condition $(*)$. Then the solution $u=|x|^{\alpha_\lambda}$ is monotonically smaller than the supersolution $v$ and by Lemma \ref{super} we conclude that \eqref{tH1} holds for any positive supersolution $w$.

If $\lambda=C_H$ we have $\alpha_{\lambda}=\bar\alpha_{\lambda}$ and the previous argument is not applicable. 
Instead, consider  
$$v(x)=|x|^{\alpha_\lambda}\log^\beta| x|,$$    
in a subdomain $B[\rho_0, \infty )[$ with $\rho_0$ sufficiently large in order to guarantee that 
$v'(r)<0$ for all $r\geq \rho_0$. Then by a direct computation (see Table~\ref{table}), $v$ is a supersolution of \eqref{mainhardy} in $B[\rho_0, \infty )$ if
$\beta\in [0,2/p[$ and $\rho_0$ is sufficiently large. Thus we can apply the previous argument with $\beta\in ]0, 2/p]$ in $B[\rho_0, \infty )$ to conclude that  \eqref{tH1} holds in $B[\rho_0,\infty [$ hence also in $B[r_0,\infty [$ for any positive supersolution $w$.

To establish the bound \eqref{tH2}, we take the solution  $\bar u(x)=|x|^{\bar\alpha_\lambda}$ and note that
if  $0<\lambda<C_H$ then  the function $v(x)=|x|^{\beta}$  used above satisfies the condition   $v=o(\bar u)$ as $r\to\infty$ and hence alternative (i) in Theorem \ref{c-super} can not hold. Thus \eqref{tH2} holds for a generic $v$ as in the statement. If $\lambda=0$ then inequality \eqref{tH2} is trivial because the numerator in \eqref{tH2} is identically equal to $1$ and the denominator is assumed to be monotone decreasing hence bounded.

If $\lambda=C_H$ we take the subsolution $\bar u(x)=|x|^{\bar\alpha_\lambda}\log^{2/p}|x| \log^\tau \log |x| $ with $\tau>0$  in a subdomain $B[\rho_0, \infty )[$ with $\rho _0$ sufficiently large (see Table~\ref{table}) and the solution 
$v(x)=|x|^{\frac{p-N}{p}}$ and note again that $v=o(\bar u)$ as $r\to\infty$ and hence alternative (i) in Theorem \ref{c-super} cannot hold.
%
%
%
%
\qed  

We  now apply Theorem~\ref{c-super-2}  to equation \eqref{mainhardy} with $\lambda \in [0, C_H]$  in any exterior domain $B[r_0, \infty [$.

\begin{thm} \label{tHb} Let $p\in [2,\infty [$ be such that $p>N$ and let $\lambda \in [0, C_H]$.
	Then for any positive  radial strictly monotone increasing $C^2$ strict  supersolution $v$ of \eqref{mainhardy}  we have
	\begin{equation}\label{tH1b}|x|^{\alpha_\lambda}\lesssim v\quad\text{in $B[r_0, \infty [$}.\end{equation}
	Further, for any positive (not necessarily radial or increasing) $C^2$-supersolution $w$ of \eqref{mainhardy}, if $\lambda <C_H$ we have
	\begin{equation}\label{tH2b}\limsup_{|x|\to \infty }\frac{|x|^{\bar\alpha_\lambda}}{w(x)}>0,
	\end{equation}
	 or if $\lambda= C_H$  then for any $\tau>0$ we have
\begin{equation}\label{tH2-logb}
	\limsup_{|x|\to \infty}\frac{|x|^{\frac{p-N}{p}}\log^{2/p}|x|  \log^\tau\log |x|}{w(x)}>0.
\end{equation}
 \end{thm}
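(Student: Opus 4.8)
The plan is to mirror the proof of Theorem~\ref{tH}, but with the roles of ``decreasing'' and ``increasing'' interchanged, invoking Theorem~\ref{c-super-2} in place of Theorem~\ref{c-super}. Since $p>N$, the exponents satisfy $0\le\alpha_\lambda\le\bar\alpha_\lambda\le(p-N)/(p-1)$, so the power functions $|x|^{\alpha}$ with $\alpha$ in this range are \emph{increasing}, which is exactly the regime to which Theorem~\ref{c-super-2} applies.

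First consider the case $\lambda<C_H$. I would fix $\beta\in\,]\alpha_\lambda,\bar\alpha_\lambda[$ and set $v_0(x)=|x|^\beta$; by the sub/supersolution computation recalled before the theorem, $v_0$ is a positive radial strictly monotone increasing $C^2$ strict supersolution of \eqref{mainhardy} (strict because $\beta\ne\alpha_\lambda,\bar\alpha_\lambda$), which in particular guarantees condition $(*)$. The subsolution of interest is $u(x)=|x|^{\alpha_\lambda}$, which is increasing with $u(r_0)>0$. To obtain \eqref{tH1b}, I apply Theorem~\ref{c-super-2}: I must rule out alternative (ii), i.e. I must show that $u$ \emph{is} smaller than some positive supersolution $w$. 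Taking $w=v_0$, we have $u(r)/v_0(r)=r^{\alpha_\lambda-\beta}\to 0$ as $r\to\infty$ since $\alpha_\lambda<\beta$; hence alternative (ii) fails and alternative (i) must hold, which is precisely \eqref{tH1b} for every positive radial strictly monotone increasing $C^2$ strict supersolution $v$. For \eqref{tH2b}, take $\bar u(x)=|x|^{\bar\alpha_\lambda}$, a solution (hence subsolution) of \eqref{mainhardy}, increasing with $\bar u(r_0)>0$. The function $v_0$ from above satisfies $v_0=o(\bar u)$ as $r\to\infty$ (since $\beta<\bar\alpha_\lambda$), so $\bar u$ is \emph{not} smaller than the positive supersolution $v_0$; therefore alternative (ii) of Theorem~\ref{c-super-2} applied to the subsolution $\bar u$ must hold, giving \eqref{tH2b} for every positive $C^2$-supersolution $w$. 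If $\lambda=0$ then $\bar\alpha_\lambda=(p-N)/(p-1)$ and \eqref{tH2b} is immediate (or one may instead note the numerator is a bona fide increasing solution and argue directly).

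For the critical case $\lambda=C_H$ one has $\alpha_\lambda=\bar\alpha_\lambda=(p-N)/p$ and the pure power argument degenerates, so logarithmic corrections are needed exactly as in Theorem~\ref{tH}. Passing to a subdomain $B[\rho_0,\infty[$ with $\rho_0$ large, I would use $v(x)=|x|^{\alpha_\lambda}\log^\beta|x|$ with $\beta\in\,]0,2/p[$, which by the direct computation summarised in Table~\ref{table} is a (strict, increasing, for $\rho_0$ large) supersolution; running the Theorem~\ref{c-super-2} argument with $u(x)=|x|^{\alpha_\lambda}$ as the subsolution and this $v$ as the witness that $u$ is smaller, I obtain \eqref{tH1b} on $B[\rho_0,\infty[$ and hence on $B[r_0,\infty[$. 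For \eqref{tH2-logb}, I take the subsolution $\bar u(x)=|x|^{(p-N)/p}\log^{2/p}|x|\,\log^\tau\log|x|$ with $\tau>0$ (again a subsolution for $\rho_0$ large by Table~\ref{table}) and the solution $v(x)=|x|^{(p-N)/p}$; since $v=o(\bar u)$ as $r\to\infty$, alternative (i) of Theorem~\ref{c-super-2} cannot hold for $\bar u$, forcing alternative (ii), which is \eqref{tH2-logb}.

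The main obstacle is not conceptual but bookkeeping: one must verify at each step that the chosen comparison functions are \emph{strictly monotone increasing} on the relevant annulus (this is why one passes to $B[\rho_0,\infty[$ in the critical case, to ensure the derivative of $|x|^{\alpha_\lambda}\log^\beta|x|$ has the right sign), that they are \emph{strict} supersolutions where Theorem~\ref{c-super-2} demands it, and that the sub/supersolution inequalities with logarithmic corrections hold for $\rho_0$ large — all of which is exactly the content of the table of computations referenced as Table~\ref{table}. There is also the minor point that Theorem~\ref{c-super-2} is stated for subsolutions $u$ with $u(r_0)\ge0$, which is automatic here since all our power and power-times-log functions are positive; and the passage from a statement on $B[\rho_0,\infty[$ back to $B[r_0,\infty[$ uses only that ``$\lesssim$'' is insensitive to the finite piece $[r_0,\rho_0]$ where $w$ is positive and continuous.
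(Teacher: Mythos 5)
Your overall strategy is exactly the paper's: apply Theorem~\ref{c-super-2} with explicit power and power--log comparison functions, negating one alternative of the dichotomy to obtain the other. The non-critical case and the first half of the critical case go through. There is, however, one step that fails as written: in the critical case $\lambda=C_H$, to obtain \eqref{tH2-logb} you negate alternative (i) of Theorem~\ref{c-super-2} by exhibiting the \emph{solution} $v(x)=|x|^{(p-N)/p}$ with $v=o(\bar u)$. But alternative (i) of Theorem~\ref{c-super-2} is quantified only over positive radial strictly monotone increasing \emph{strict} supersolutions, and at $\lambda=C_H$ the pure power $|x|^{(p-N)/p}$ is an exact solution, hence not a strict supersolution, so it is not an admissible witness for negating (i). (You appear to have transplanted the witness from the proof of Theorem~\ref{tH}; there the relevant alternative in Theorem~\ref{c-super} is quantified over all supersolutions, so an exact solution does qualify --- this is precisely the asymmetry between the decreasing and increasing versions of the alternative.) The repair is the one the paper makes: take instead $v^*(x)=|x|^{(p-N)/p}\log^{\beta}|x|$ with $\beta\in\,]0,2/p[$, which by Table~\ref{table} is a strict, strictly increasing supersolution on $B[\rho_0,\infty[$ for $\rho_0$ large, and which still satisfies $v^*=o(\bar u)$ because $\beta<2/p$; then $\bar u\not\lesssim v^*$, alternative (i) fails, and (ii) yields \eqref{tH2-logb}.

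A smaller point of logic in your derivation of \eqref{tH2b}: from $v_0=o(\bar u)$ you conclude that ``$\bar u$ is not smaller than $v_0$, therefore alternative (ii) holds.'' Failing to be smaller than one particular supersolution does not by itself give (ii), which is a statement about \emph{every} supersolution; what you actually need (and what $v_0=o(\bar u)$ does provide) is that $\bar u\lesssim v_0$ fails for the strictly increasing strict supersolution $v_0$, which negates alternative (i) and hence forces (ii). Since $v_0(x)=|x|^{\beta}$ with $\beta\in\,]\alpha_\lambda,\bar\alpha_\lambda[$ is indeed a strict supersolution, this step is correct in substance and only misstated. Finally, note that for $\lambda=0$ the special treatment is needed for \eqref{tH1b} (where $|x|^{\alpha_0}\equiv 1$ is constant and the bound is trivial because $v$ is increasing), not for \eqref{tH2b}, where your general argument already applies since $]\alpha_0,\bar\alpha_0[$ is nonempty.
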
 

\proof The proof is similar to the proof of Theorem~\ref{tH}.    
First assume $0<\lambda<C_H$. Consider  the  solution $u(x) =|x|^{\alpha_\lambda}$.
Fix $\beta\in ]\alpha_{\lambda}, \bar\alpha_{\lambda} [$ and consider the supersolution defined by $w^*(x)= |x|^{\beta} $. Note that function $w^*$ guarantees the validity of condition $(*)$.
Since inequality \eqref{principle1bis} is not satisfied by the couple of functions $u,w^*$ then alternative $(i)$ in Theorem~\ref{c-super-2} holds hence estimate \eqref{tH1b} is satisfied for any $v$ as in statement. For $\lambda =0$ estimate \eqref{tH1b} is trivial since the left-hand side is constant and $v$  is increasing.  
If $\lambda=C_H$ we have $\alpha_{\lambda}=\bar\alpha_{\lambda}$ and the previous argument is not applicable. 
Again, as in the proof of Theorem~\ref{tH}, we consider  the supersolution
$$w^*(x)=|x|^{\alpha_\lambda}\log^\beta| x|,$$    
with $\beta\in]0,2/p[$  and repeat the argument to conclude that  \eqref{tH1b} holds for any $v$ as in the statement. 

To establish \eqref{tH2b}, we take the solution  $\bar u (x)=|x|^{\bar\alpha_\lambda}$, and  the strict supersolution defined by $v^*(x)= |x|^{\beta}$ where   $\beta\in ]\alpha_{\lambda}, \bar\alpha_{\lambda} [$ is fixed.  Since \eqref{limitesupbis} is not satisfied by the couple of functions $\bar u, v^*$ then alternative $(i)$ in Theorem~\ref{c-super-2} doesn't hold hence \eqref{tH2b} holds for any $w$ as in the statement. 
Finally, in order to prove \eqref{tH2-logb} we repeat the same argument in a subdomain  $B[\rho_0,\infty)$ with $\rho_0$ sufficiently big,  with  the subsolution $\bar u(x)=|x|^{\frac{p-N}{p}}\log^{2/p}|x|(\log\log |x|)^\tau$ with $\tau >0$ and the strict supersolution $v^*(x)=|x|^{\frac{p-N}{p}} \log^{\beta}|x|  $ with $\beta \in ]0,2/p[$, see Table~\ref{table}. 
\qed

\begin{rem} If $p=N\geq 2$ the statement of Theorem~\ref{tHb} holds with the following modifications. First of all we note that if $p=N$ then $\lambda=C_H=0$ hence the case $0\le \lambda <C_H$ is excluded.
Moreover, it turns out that  $\alpha_{\lambda}=\bar\alpha_{\lambda}=0$ hence inequality \eqref{tH1b} is trivial.  On the other hand, inequality 
\eqref{tH2-logb} has to be replaced by the inequality 
\begin{equation}\label{tH2-logbbis}
	\limsup_{|x|\to \infty}\frac{\log |x| }{w(x) }>0.
\end{equation}
This can be proved in a similar way.  Namely, we consider the subsolution  $\bar u(x)=\log |x|$ and the strict supersolution 
$v^*(x)=\log  ^{\beta} |x|$ with $0<\beta < 1$, see Table~\ref{table} (note that in this case $\beta_0=0$ and $\bar\beta_0=1$, see the next section). Since \eqref{limitesupbis} is not satisfied by the couple of functions $\bar u, v^*$ then alternative $(i)$ in Theorem~\ref{c-super-2} doesn't hold hence estimate \eqref{tH2-logbbis}
holds for any $w$ as in the statement.  
\end{rem}

\begin{rem}\label{lampinrem}
Here we highlight the connection between our results and some results proved in \cite{lampin}. 
For this purpose, we rescale the exponents in our estimates by setting   
$$
\alpha = \frac{p-N}{p-1}\beta,
$$
for any $\beta \in \mathbb{R}$.  We note that 
$$
\lambda_{  \frac{p-N}{p-1}\beta}= \left|\frac{p-N}{p-1} \right|^p  \mu_{\beta}
$$
where 
$$
\mu_{\beta}= (p-1)\beta |\beta |^{p-2}(1-\beta)\ . 
$$
Note that the quantity $\mu_{\beta }$ is actually denoted by $\lambda_{\beta}$ in  \cite{lampin}. Let  $\lambda\in [0,C_H]$ be fixed. In order to compare our estimates with those in \cite{lampin}, we  need also to  consider the  exponents 
 $\beta_1\in [(p-1)/p,1]$  and $\beta_2\in [0,(p-1)/p] $ defined by 
$$
\mu_{\beta_1}=\mu_{\beta_2}=\left|\frac{p-1}{p-N}\right|^p\lambda  .
$$
Note that $\beta_2\le\beta_1$ as in \cite{lampin}, hence  $\alpha_1= ((p-N)/p)\beta_1$ and $\alpha_2= ((p-N)/p)\beta_1$ satisfy the inequalities: $\alpha_1\le \alpha_2$ if $p<N$, and $\alpha_1\geq \alpha_2$ if $p>N$.

If  $2\le  p< N$ then by Theorem~\ref{tH} it follows that  for any positive  (not necessarily radial or decreasing)  supersolution $w$ of \eqref{mainhardy} in $B[r_0, \infty [$ we have
	\begin{equation}\label{tH1bis}|x|^{\frac{\beta_1(p-N)}{p-1}}\lesssim w\quad\text{in $B[r_0, \infty [$}.\end{equation}

 If $ p> \max\{N,2\}$ then  by Theorem~\ref{tHb} it follows that  for any positive  radial strictly monotone increasing $C^2$ strict  supersolution $v$ of \eqref{mainhardy} in $B[r_0, \infty [$ we have
	\begin{equation}\label{tH1btris}|x|^{\frac{\beta_2(p-N)}{p-1}} \lesssim v\quad\text{in $B[r_0, \infty [$}.\end{equation}
	
Consider now the equation
\begin{equation}	\label{hardydistance}
	-\Delta_pu-\frac{\lambda}{\delta^p}u^{p-1}=0
\end{equation}
in an exterior domain $\Omega$ (not necessarily symmetric), where $\delta (x)$ denotes the Euclidean distance of $x\in {\mathbb{R}}^N$ to $\partial \Omega$.  Here by exterior domain we mean  the complement in $\mathbb{R}^N$ of a sufficiently regular compact set $K$.  Without loss of generality assume that $0$ belongs to the interior of $K$.  Under these assumptions,  we have  that 
$$
\delta (x) < |x|
$$
for all $x\in \Omega$. It follows that  if $u$ is a positive solution to \eqref{hardydistance} then $u$ is a strict supersolution to \eqref{mainhardy} in  $B[r_0, \infty [$ for any $r_0$ such that   $B[r_0, \infty [\subset \Omega$.

In particular,  if  $2\le  p< N$ then  $u$ satisfies  \eqref{tH1bis}  ($w$ replaced by $u$) hence we retrieve the estimate  for $u$ proved   in \cite[Thm.~5.4~ (ii)]{lampin} by a different  method. 

 If $ p> \max\{N,2\}$ and  $u$ is in addition assumed to be  radial and strictly monotone increasing then  $u$ satisfies  \eqref{tH1btris}  ($v$ replaced by $u$)   hence we retrieve the estimate  for $u$ proved   in \cite[Thm.~5.4~ (iii)]{lampin} by another method. 
	
	We note that in    \cite[Cor.~5.5]{lampin} it is proved that the so-called solution of minimal growth of equation \eqref{hardydistance} in $\Omega$ is asymptotic  for $|x|\to \infty$ to 
	$|x|^{\frac{\beta_1(p-N)}{p-1}}$  if $p<N$ and to $|x|^{\frac{\beta_2(p-N)}{p-1}}$  if $p>N$ and this can be interpreted by using  our alternatives in Theorem~\ref{tH} and Theorem~\ref{tHb} which `classify' the functions  $|x|^{\frac{\beta_1(p-N)}{p-1}}$  if $p<N$ and to $|x|^{\frac{\beta_2(p-N)}{p-1}}$  if $p>N$ as `small solutions' of equation \eqref{mainhardy}.
\end{rem}

\subsection{Improved Hardy potential case.}
\label{improvedsubsec}

The so--called ``improved'' Hardy potential  is
$$V(x)=\frac{C_H}{|x|^p}+\frac{\eps}{|x|^p\log^{m_*}|x|},$$
where $\eps>0$, $m_*=2$ if $p\ne N$, and  $m_*=N$ if $p=N$.  Recall that $C_H=|(p-N)/p|^p$.
The corresponding equation is 
\begin{equation}\label{mainhardy-impr}
	-\Delta_p u-\frac{C_H}{|x|^p} |u|^{p-2}u-\frac{\eps}{|x|^p\log^{m_*}|x|}|u|^{p-2}u=0\quad \text{in $B[r_0,\infty)$}.
\end{equation}

The natural family of sub and supersolutions for this equation is
$$u_{\beta,\tau}(r):=r^{\frac{p-N}{p}}(\log r)^\beta(\log\log r)^\tau,$$
for suitable values of  $\beta, \tau\in\R$ that we are going to describe.
Following \cite{LLM}, we set $C_*=\frac{p-1}{2p}\left|\frac{p-N}{p}\right|^{p-2}$ if $p\ne N$ and $C_*=\left(\frac{N-1}{N}\right)^N$ if $p=N$.
If $\eps\in[0,C_\ast]$ we denote
by $\beta_\eps, \bar\beta_\eps$ with $\beta_\eps\le \bar\beta_\eps$,  the real roots of the equation
\begin{eqnarray}\label{e:80><N}
	\tfrac{1}{2}\left|\tfrac{p-N}{p}\right|^{p-2}(p-1)(2-\beta p)\beta=\eps,\ \ \ {\rm if}\ p\ne N\\
	(N-1)(1-\beta) \beta | \beta |^{N-2}=\epsilon,\ \ \ {\rm if}\ p= N.
\end{eqnarray}
For $p\ne N$ we have  $0<\beta_\eps<\frac{1}{p}<\bar\beta_\eps<\frac{2}{p}$  if $0<\eps<C_*$, 
 $\beta_\eps=\bar\beta_\eps=\frac{1}{p}$ if $\eps=C_*$, and  $\beta_\eps=0, \bar\beta_\eps=\frac{2}{p}$ if $\epsilon =0$.  
 For $p=N$ we have  $0<\beta_\eps<\frac{N-1}{N}<\bar\beta_\eps< 1$  if $0<\eps<C_*$, 
 $\beta_\eps=\bar\beta_\eps=\frac{N-1}{N}$ if $\eps=C_*$, and  $\beta_\eps=0, \bar\beta_\eps=1$ if $\epsilon =0$. 
 
Table~\ref{table}, that is taken from \cite[Table 1]{LLM} (here we have included all cases $\eps\in [0,C_*]$ in the first row)  summarizes the properties of the family $u_{\beta,\tau}$ with respect to the equation
\eqref{mainhardy-impr}. The radius $r_0>1$ is assumed to be chosen sufficiently large since in some cases this is necessary to guarantee that we have in fact a subsolution or a supersolution.
The proofs were obtained by tedious direct computations, see \cite[Lemma A.1 and (A.6)]{LLM} (note that the case $\epsilon\in]0, C_*]$ in the first row can be analyzed in the same way).

\begin{table}[H]
	\begin{center}
		\begin{footnotesize}
			\begin{tabular}{|l|c|c|c|}
				\hline
				& {\sl Subsolution} & {\sl Supersolution} & {\sl Subsolution}\\
				\hline
				$p=N$,  $\eps\in [0,C_*]$ &
				$\beta\le \beta_\eps\,$, $\tau=0$ &
				$\beta\in[\beta_\eps,\bar\beta_\eps]$, $\tau=0$ &
				$\beta\ge\bar\beta_\eps\,$, $\tau=0$\\
				\hline
				$p\neq N$, $\eps=0$ &
				$\beta\le 0\,$, $\tau=0$ &
				$\beta\in[0,2/p[$, $\tau=0$ &
				$\beta>2/p\,$, $\tau=0$\\
				&
				&
				&
				$\beta=2/p\,$, $\tau>0$\\
				\hline
				$p\neq N$, $\eps\in(0,C_\ast)$ &
				$\beta<\beta_\eps\,$, $\tau=0$ &
				$\beta\in(\beta_\eps,\bar\beta_\eps)$, $\tau=0$ &
				$\beta>\bar\beta_\eps\,$, $\tau=0$\\
				&
				$\beta=\beta_\eps\,$, $\tau<0$ &
				$\beta=\beta_\eps$, $\tau>0$ or $\beta=\bar\beta_\eps$, $\tau<0$ &
				$\beta=\bar\beta_\eps\,$, $\tau>0$\\
				\hline
				$p\neq N$, $\eps=C_\ast$ &
				$\beta<1/p\,$, $\tau=0$ &
				&
				$\beta>1/p\,$, $\tau=0$\\
				&
				$\beta=1/p\,$, $\tau<0$ &
				$\beta=1/p\,$, $\tau\in(0,2/p)$&
				$\beta=1/p\,$, $\tau>2/p$\\
				\hline
			\end{tabular}
			\caption{{\em 
					Properties of
					$u_{\beta,\tau}=r^{\frac{p-N}{p}}(\log r)^\beta(\log\log r)^\tau$,
					for a large $r_0>1$.}}\label{table}
		\end{footnotesize}
	\end{center}
\end{table}

We are now ready to show that  Lemma~\ref{super}  and Theorem~\ref{c-super} are applicable to equation \eqref{mainhardy-impr} with $\eps \in ]0, C_*]$  in any exterior domain $B[r_0, \infty [$.

\begin{thm} \label{tHimpr}
	 Let $2\le p<N$, $\eps \in ]0, C_*]$ and $r_0>e$. Then for any positive  (not necessarily radial)  $C^2$-supersolution $w$ of \eqref{mainhardy-impr}  we have that 
	\begin{equation}\label{tHimpr1}|x|^\frac{p-N}{p}\log^{\beta_\eps}|x|\log\log^\tau|x|\lesssim w\quad\text{in $B[r_0, \infty [$},\end{equation}
	for any $\tau<0$.
	Further, for any positive radial strictly monotone decreasing $C^2$-supersolution $v$ of \eqref{mainhardy-impr}  we have that 
	\begin{equation}\label{tHimpr2}\limsup_{r\to\infty}\frac{  |x|^\frac{p-N}{p}\log^{\bar\beta_{\epsilon} }|x|\log\log^\tau|x|  }{v(x)}>0,
	\end{equation}
	 if $\epsilon<C_*$ and  $\tau >0$ or  $\epsilon= C_*$ and  $\tau>2/p$.
\end{thm}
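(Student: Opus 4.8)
The plan is to follow the proof of Theorem~\ref{tH} almost verbatim, replacing the power family $|x|^\alpha$ by the logarithmic family $u_{\beta,\tau}(r)=r^{\frac{p-N}{p}}(\log r)^\beta(\log\log r)^\tau$ and reading off its behaviour with respect to \eqref{mainhardy-impr} from Table~\ref{table}. Since $2\le p<N$ the leading exponent $\frac{p-N}{p}$ is strictly negative, so a short computation of the logarithmic derivative $u_{\beta,\tau}'/u_{\beta,\tau}$ shows that every member of the family is a positive, strictly monotone decreasing $C^2$ function on $B[\rho_0,\infty[$ once $\rho_0\ge r_0$ is large enough; all comparisons will therefore be carried out on such a subdomain and then transferred to $B[r_0,\infty[$ exactly as in Theorem~\ref{tH}, using that the subsolution in question is bounded on the compact annulus $B[r_0,\rho_0]$ while any positive $C^2$-supersolution is bounded below there by a positive constant. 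Note also that $V(x)=\frac{C_H}{|x|^p}+\frac{\eps}{|x|^p\log^{m_\ast}|x|}>0$ for $|x|>1$ because $C_H>0$, so the hypothesis $V>0$ needed to invoke Theorem~\ref{c-super} is met, and condition $(*)$ will be supplied in each step by the supersolution exhibited there.

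To prove \eqref{tHimpr1}, fix $\sigma>0$, requiring in addition $\sigma<2/p$ when $\eps=C_\ast$, and set $v=u_{\beta_\eps,\sigma}$; by Table~\ref{table} this is a strict supersolution of \eqref{mainhardy-impr} on $B[\rho_0,\infty[$ for $\rho_0$ large, so $(*)$ holds. For $\tau<0$, Table~\ref{table} gives that $u=u_{\beta_\eps,\tau}$ is a subsolution, and the quotient $u(r)/v(r)=(\log\log r)^{\tau-\sigma}$ tends to $0$ and is eventually monotone decreasing since $\tau-\sigma<0$; hence $u$ is monotonically smaller than $v$ in the sense of Definition~\ref{def-smaller}(iii). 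Lemma~\ref{super} then yields $u\lesssim w$ on $B[\rho_0,\infty[$, hence on $B[r_0,\infty[$, for every positive $C^2$-supersolution $w$ of \eqref{mainhardy-impr}, which is \eqref{tHimpr1}.

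To prove \eqref{tHimpr2}, take $u=u_{\bar\beta_\eps,\tau}$, which by Table~\ref{table} is a subsolution of \eqref{mainhardy-impr} on $B[\rho_0,\infty[$ provided $\tau>0$ (and $\tau>2/p$ if $\eps=C_\ast$), and which is positive, radial, strictly monotone decreasing and $C^2$ there. Apply Theorem~\ref{c-super} on $\Omega=B]\rho_0,\infty[$. It remains to rule out its alternative (i), and for that it suffices to exhibit one positive $C^2$-supersolution $w^\ast$ of \eqref{mainhardy-impr} with $w^\ast=o(u)$ as $r\to\infty$, since then $u\lesssim w^\ast$ fails. If $\eps<C_\ast$, take $w^\ast=u_{\gamma,0}$ with $\gamma\in(\beta_\eps,\bar\beta_\eps)$, so that $w^\ast/u=(\log r)^{\gamma-\bar\beta_\eps}(\log\log r)^{-\tau}\to 0$ because $\gamma<\bar\beta_\eps$; if $\eps=C_\ast$, take $w^\ast=u_{1/p,\sigma}$ with $\sigma\in(0,2/p)$, so that $w^\ast/u=(\log\log r)^{\sigma-\tau}\to 0$ because $\sigma<2/p<\tau$. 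In either case alternative (i) of Theorem~\ref{c-super} cannot hold, so alternative (ii) does, i.e.\ $\limsup_{r\to\infty}u(r)/v(r)>0$ for every positive radial strictly monotone decreasing $C^2$-supersolution $v$ of \eqref{mainhardy-impr}, which is \eqref{tHimpr2}.

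I do not expect a genuine obstacle: once Table~\ref{table} is granted, the argument is pure bookkeeping paralleling Theorem~\ref{tH}. The only points that will require attention are the split $\eps<C_\ast$ versus $\eps=C_\ast$ — needed so that the exponents chosen for $v$ and for $w^\ast$ really lie in the supersolution regime of the table, since at $\eps=C_\ast$ one has $\beta_\eps=\bar\beta_\eps=1/p$ and the admissible $\tau$-ranges shrink — together with the (harmless) passage from the enlarged inner radius $\rho_0$ back to $r_0$, which for \eqref{tHimpr2} is automatic since that estimate only concerns the behaviour as $r\to\infty$.
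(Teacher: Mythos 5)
Your proposal is correct and follows essentially the same route as the paper: read off sub/supersolutions from Table~\ref{table}, apply Lemma~\ref{super} on a large annulus $B[\rho_0,\infty[$ for the lower bound \eqref{tHimpr1}, and defeat alternative (i) of Theorem~\ref{c-super} with an explicit comparison supersolution to get \eqref{tHimpr2}. The only (immaterial) differences are your choices of comparison supersolutions — $u_{\beta_\eps,\sigma}$ with $\sigma>0$ in the first part and $u_{\gamma,0}$ with $\gamma\in(\beta_\eps,\bar\beta_\eps)$ in the second, where the paper uses $u_{1/p,0}$ and $u_{\bar\beta_\eps,\tilde\tau}$ with $\tilde\tau<0$ respectively for $\eps<C_\ast$ — all of which lie in the same supersolution regime of the table.
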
 

\proof
First assume $\eps<C_*$. Fix $\tau<0$ and choose $\rho_0>1$ so that the subsolution
$$u(x)=|x|^\frac{p-N}{p}\log^{\beta_\eps}|x|\log\log^\tau|x|$$ 
is monotonically smaller in $B[\rho_0,\infty)$ than the supersolution defined by 
$$v(x)=|x|^\frac{p-N}{p}\log^{1/p}|x|$$ 
and such that both $u$ and $v$ are decreasing (see Table~\ref{table} and recall that $1/p\in (\beta_{\epsilon}, \bar\beta_{\epsilon})$). Note that function $v$ guarantees the validity of condition $(*)$. 
Thus Lemma~\ref{super} is applicable to the pair of functions $u,v$ in $B[\rho_0,\infty)$ and the conclusion follows.

If $\eps=C_*$ we have $\beta_{\eps}=\bar\beta_\eps=1/p$ and the previous argument is not applicable.
Instead, consider  
$$v=|x|^\frac{p-N}{p}\log^{\beta_\eps}|x|\log\log^{1/p}|x|,$$
and choose $\rho_0>0$ such that both $u$ and $v$ are monotone decreasing and $u$ is monotonically smaller than $v$ in $B[\rho_0,\infty)$.
Thus \eqref{tHimpr1} follows by Lemma~\ref{super}.\\
In order to prove \eqref{tHimpr2}  for $\epsilon <C_*$ we note that the subsolution  $u= |x|^\frac{p-N}{p}\log^{\bar\beta_{\epsilon} }|x|\log\log^\tau|x| $ with $\tau >0$  
and the supersolution $w= |x|^\frac{p-N}{p}\log^{\bar\beta_{\epsilon} }|x|\log^{\tilde\tau} \log  |x|$ with $\tilde\tau <0$  do not satisfy estimate \eqref{c-super1} hence alternative (i)
in Theorem~\ref{c-super} doesn't hold. Thus alternative (ii), hence \eqref{tHimpr2}   must necessarily hold.   The case $\epsilon =C_*$ can be treated in the same way using the subsolution $u= |x|^\frac{p-N}{p}\log^{1/p }|x|\log\log^\tau|x| $ with $\tau >2/p$ and the supersolution $v= |x|^\frac{p-N}{p}\log^{1/p}|x|\log\log^{\tilde\tau}|x| $ with $0<\tilde\tau <2/p$ (see Table~\ref{table} and recall that $\bar\beta_{\epsilon}=1/p$ in this case).
\qed

We  now apply Theorem~\ref{c-super-2}  to equation \eqref{mainhardy-impr} with $\epsilon \in ]0, C_*]$  in any exterior domain $B[r_0, \infty [$ with $r_0>1$.

\begin{thm} \label{tHbcc} Let $p\in [2,\infty [$ be such that $p>N$ and let $\epsilon \in ]0, C_*]$ and $r_0>e$.
	Then for any positive  radial strictly monotone increasing $C^2$ strict  supersolution $v$ of \eqref{mainhardy-impr}  we have
	\begin{equation}\label{tH1bcc}|x|^\frac{p-N}{p}\log^{\beta_\eps}|x|\log\log^\tau|x|  \lesssim v\quad\text{in $B[r_0, \infty [$},\end{equation}
	for any $\tau <0$.
	Further, for any positive (not necessarily radial or increasing) $C^2$-supersolution $w$ of \eqref{mainhardy-impr}   we have
	\begin{equation}\label{tH2bcc}\limsup_{r\to \infty}\frac{|x|^\frac{p-N}{p}\log^{\bar\beta_\eps}|x|\log\log^\tau|x|  }{w(x)}>0,
	\end{equation}
	  if $\epsilon<C_*$ and  $\tau >0$ or  $\epsilon= C_*$ and  $\tau>2/p$.
 \end{thm}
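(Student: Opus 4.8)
The plan is to argue exactly as in the proofs of Theorems~\ref{tHb} and~\ref{tHimpr}, this time feeding Theorem~\ref{c-super-2} with members of the family $u_{\beta,\tau}$ collected in Table~\ref{table}. All the work will be done on a subdomain $B[\rho_0,\infty)$ with $\rho_0\ge r_0$ chosen large enough that every sub/supersolution and every monotonicity statement borrowed from Table~\ref{table} is valid on $B[\rho_0,\infty)$. Since the functions involved — the left-hand sides of \eqref{tH1bcc} and \eqref{tH2bcc}, and the supersolutions $v$, $w$ from the statement — are all positive and continuous, a bound valid on $B[\rho_0,\infty)$ extends to $B[r_0,\infty)$ up to a multiplicative constant on the compact annulus $B[r_0,\rho_0]$ (this matters only for \eqref{tH1bcc}), while the $\limsup$ as $|x|\to\infty$ in \eqref{tH2bcc} is insensitive to shrinking the domain. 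Finally, any $v$ or $w$ which is a supersolution on $B[r_0,\infty)$ is still one on the smaller annulus $B]\rho_0,\infty[$, so applying Theorem~\ref{c-super-2} with $\Omega=B]\rho_0,\infty[$ does deliver the conclusions for all admissible $v$, $w$.

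For part (i) I would first take $\eps<C_*$ and fix $\tau<0$. Let $u=u_{\beta_\eps,\tau}$: by Table~\ref{table} this is a radial $C^2$-subsolution of \eqref{mainhardy-impr} on $B[\rho_0,\infty)$, it is strictly monotone increasing there because $\tfrac{p-N}{p}>0$, and $u(\rho_0)>0$ (the hypothesis $r_0>e$ is used here). As comparison supersolution take $v^*(x)=|x|^{\frac{p-N}{p}}\log^{1/p}|x|$: since $\tfrac1p\in\,]\beta_\eps,\bar\beta_\eps[$, it is a positive radial strictly monotone increasing $C^2$ strict supersolution on $B[\rho_0,\infty)$, which in particular secures $(*)$. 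Because $\beta_\eps<\tfrac1p$ and $\tau<0$ one has $u=o(v^*)$ as $|x|\to\infty$, so $\limsup_{|x|\to\infty}u/v^*=0$; hence alternative (ii) of Theorem~\ref{c-super-2} is impossible for this $u$, and therefore alternative (i) holds, which is exactly \eqref{tH1bcc}. For $\eps=C_*$, where $\beta_\eps=\bar\beta_\eps=\tfrac1p$, the same reasoning goes through with $u=u_{1/p,\tau}$ ($\tau<0$) and $v^*(x)=|x|^{\frac{p-N}{p}}\log^{1/p}|x|\,(\log\log|x|)^{\tilde\tau}$, $\tilde\tau\in\,]0,2/p[$, a strict supersolution by Table~\ref{table}; again $u=o(v^*)$, now because $\tau<0<\tilde\tau$.

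For part (ii) I would set $\bar u=u_{\bar\beta_\eps,\tau}$ with $\tau>0$ when $\eps<C_*$, and $\bar u=u_{1/p,\tau}$ with $\tau>2/p$ when $\eps=C_*$; in either case Table~\ref{table} shows this is a radial $C^2$-subsolution of \eqref{mainhardy-impr} on $B[\rho_0,\infty)$, monotone increasing there, with $\bar u(\rho_0)>0$. To rule out alternative (i) of Theorem~\ref{c-super-2} it is enough to produce one strict supersolution that does not dominate $\bar u$: take $v^{**}(x)=|x|^{\frac{p-N}{p}}\log^{\bar\beta_\eps}|x|\,(\log\log|x|)^{\tilde\tau}$ with $\tilde\tau<0$ when $\eps<C_*$, and $v^{**}(x)=|x|^{\frac{p-N}{p}}\log^{1/p}|x|\,(\log\log|x|)^{\tilde\tau}$ with $\tilde\tau\in\,]0,2/p[$ when $\eps=C_*$. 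By Table~\ref{table} both are positive radial strictly monotone increasing $C^2$ strict supersolutions, and $\bar u/v^{**}=(\log\log|x|)^{\tau-\tilde\tau}\to\infty$ since $\tau-\tilde\tau>0$ in both cases; thus \eqref{limitesupbis} fails for the pair $\bar u$, $v^{**}$, so alternative (i) of Theorem~\ref{c-super-2} cannot occur and alternative (ii) must hold, which is precisely \eqref{tH2bcc}.

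The only non-routine point, as already in \cite{LLM} and in the proof of Theorem~\ref{tH}, is to verify that the comparison functions $v^*$ and $v^{**}$ are \emph{strict} supersolutions (and not merely supersolutions), and that a single radius $\rho_0$ can be chosen making all the relevant entries of Table~\ref{table} valid at once, together with the strict monotonicity $u'>0$, $\bar u'>0$, $(v^*)'>0$, $(v^{**})'>0$ on $B[\rho_0,\infty)$. This follows from the same direct (if tedious) computations underlying Table~\ref{table}, keeping the strict inequalities in \eqref{e:80><N}; I do not expect any genuinely new difficulty beyond this bookkeeping.
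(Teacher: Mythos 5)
Your proposal is correct and follows essentially the same route as the paper: apply Theorem~\ref{c-super-2} on $B[\rho_0,\infty)$ with the subsolutions $u_{\beta_\eps,\tau}$ ($\tau<0$) and $u_{\bar\beta_\eps,\tau}$ ($\tau>0$, resp.\ $\tau>2/p$), ruling out one alternative by exhibiting an explicit comparison supersolution from Table~\ref{table}. The only (immaterial) difference is in part (i) for $\eps<C_*$, where the paper uses the single comparison function $|x|^{\frac{p-N}{p}}\log^{\beta_\eps}|x|\,(\log\log|x|)^{\tilde\tau}$ with $0<\tilde\tau<2/p$ to cover both $\eps<C_*$ and $\eps=C_*$ at once, while you use $|x|^{\frac{p-N}{p}}\log^{1/p}|x|$ and then treat $\eps=C_*$ separately.
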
 

\proof The proof is similar to the proof of Theorem~\ref{tHb}.    
For $\rho_0>0$ sufficiently large, consider in $B[\rho_0, \infty [$  the  subsolution defined by $u(x)=|x|^\frac{p-N}{p}\log^{\beta_\eps}|x|\log\log^\tau|x|$ with $\tau <0$, 
 and the supersolution defined by $w^*(x)= |x|^\frac{p-N}{p}\log^{\beta_\eps}|x|\log\log^{\tilde \tau}|x|$  with $0<\tilde \tau <2/p$ (note that this choice of $\tilde \tau$ allows to treat also the case $\epsilon=C_*$), see Table~\ref{table}. Note that function $w^*$ guarantees the validity of condition $(*)$.   Since inequality \eqref{principle1bis} is not satisfied by the couple of functions $u,w^*$ then alternative $(i)$ in Theorem~\ref{c-super-2} holds in  $B[\rho_0,\infty [$. Thus estimate \eqref{tH1bcc} is satisfied in $B[\rho_0,\infty[$, hence also in $B[r_0,\infty[$ for any $v$ as in statement.  
 
To establish \eqref{tH2bcc}, we argue in a similar way by taking the subsolution  defined by $\bar u (x)=|x|^\frac{p-N}{p}\log^{\bar\beta_\eps}|x|\log^\tau \log|x| $ with   $\tau >0$  if $\epsilon<C_*$ and  $\tau>2/p$  if $\epsilon= C_*$ and the strict  supersolution  $v^*(x)= |x|^\frac{p-N}{p}\log^{\bar\beta_\eps}|x|\log^{\tilde\tau}\log|x|$ with  $\tilde\tau <0$  if $\epsilon<C_*$ and  $0<\tilde \tau <2/p$  if $\epsilon= C_*$.     Since \eqref{limitesupbis} is not satisfied by the couple of functions $\bar u, v^*$ then alternative $(i)$ in Theorem~\ref{c-super-2} doesn't hold hence \eqref{tH2bcc} holds for any $w$ as in the statement. 
\hfill $\Box$

\begin{rem} If $p=N\geq 2$  the statement of Theorem~\ref{tHbcc} has to be modified as follows. Estimate \eqref{tH1bcc}  with $\tau =0$ holds for any $\epsilon \in ]0, C_*]$.     Indeed,  if $\epsilon <C_*$  one can repeat the argument above by using  the subsolution 
$u(x)=\log^{\beta_\eps}|x| $ and the supersolution  $w^*(x)=\log^{\beta }|x| $ with $\beta_{\epsilon}<\beta <\bar\beta_{\epsilon}$ and see that the couple $u,w^*$
do not satisfy  \eqref{principle1bis}, hence alternative $(i)$ in Theorem~\ref{c-super-2} holds. For $\epsilon =C_*$ we have $\beta_{\epsilon}=\bar\beta_{\epsilon}$ and another supersolution $w^*$ has to be used. In this case, one can verify by a direct computation that  the function
$w^*(x)=\log^{\beta_{\epsilon} }|x|\log^{\tau} \log |x| $ with  $\tau >0$  is a supersolution and the same argument can be repeated. 

Similarly, estimate \eqref{tH2bcc} with $\tau =0$ can be proved for any $\epsilon \in ]0,C_*[$. 
Indeed one can see that the couple given by the subsolution  $u(x)=\log^{\bar \beta_\eps}|x| $ and the strict supersolution $v^*(x)=\log^{\beta }|x| $ with $\beta_{\epsilon}<\beta <\bar\beta_{\epsilon}$ do not satisfy  \eqref{limitesupbis} hence  \eqref{tH2bcc} holds for any $w$ as in the statement.
\end{rem}

\begin{rem}
The results of Theorems \ref{tH},  \ref{tHb}, \ref{tHimpr} and  \ref{tHbcc}  stated in a slightly different form are essentially contained in \cite[Theorem 3.4 and 3.5]{LLM}, except of the critical case $\eps=C_*$, which is new. The proofs of the upper bounds in \cite[Theorem 3.5]{LLM} are based on an intricate construction of explicit barriers vanishing at the boundary that uses generalised Pr\"ufer transformation. We believe that Phragm\'{e}n-Lindel\"{o}f principle in this paper provides a more direct proof of upper bounds on supersolutions that could be adapted to different classes of domains and potentials, e.g. potentials that involve distances to the boundary.
\end{rem}

\noindent
{\bf Acknowledgments.}  The authors are greatful to Prof. Antonio Vitolo for references and useful discussions. 
P.D. Lamberti  is a  member of the Gruppo Nazionale per l'Analisi Matematica, la Probabilit\`a e le loro Applicazioni (GNAMPA) of the Istituto Nazionale di Alta Matematica (INdAM) and acknowledges partial support 
  of the ``INdAM GNAMPA Project'' codice CUP\_E53C22001930001 ``Operatori differenziali e integrali in geometria spettrale''.
Both authors acknowledge the warm hospitality received by each other institution on the  occasion of several research visits. 

\end{document}